\documentclass[12pt]{article}
\usepackage{geometry}                % See geometry.pdf to learn the layout options. There are lots.
\geometry{letterpaper}                   % ... or a4paper or a5paper or ...
\usepackage{graphicx,color}
\usepackage{amssymb,amsmath,amsthm,mathrsfs}
\usepackage[all,cmtip]{xy}
\usepackage{epstopdf, comment}
\usepackage{esint}

\DeclareMathOperator*{\esssup}{ess\,sup}

\usepackage[pdftex,bookmarks,pdfnewwindow,plainpages=false,unicode,pdfencoding=auto]{hyperref}

 \hypersetup{
%colorlinks=true,
%linkcolor=black,
%citecolor=black,
%urlcolor=blue,
pdfauthor={Oleg Ivrii and Ilgiz Kayumov},
pdftitle={Makarov's principle for the Bloch unit ball},
pdfsubject={Geometric function theory},
bookmarksdepth={4}
}

\DeclareMathOperator{\Area}{Area}

\DeclareMathOperator{\supp}{supp }

\DeclareGraphicsRule{.tif}{png}{.png}{`convert #1 `dirname #1`/`basename #1 .tif`.png}
\linespread{1.2}

\newtheorem{lemma}{Lemma}[section]
\newtheorem{theorem}{Theorem}[section]
\newtheorem{corollary}{Corollary}[section]

\theoremstyle{remark}
\newtheorem*{remark}{Remark}

\numberwithin{equation}{section}

\DeclareMathOperator{\im}{Im}
\DeclareMathOperator{\re}{Re}

\DeclareMathOperator{\var}{Var}

\DeclareMathOperator{\becker}{B}

\DeclareMathOperator{\hyp}{hyp}

\DeclareMathOperator{\zyg}{Z}
\DeclareMathOperator{\LIL}{LIL}

\DeclareMathOperator{\per}{per}

\newcommand{\dzy}{\,\frac{|dz|^2}{y}}
\newcommand{\rhoH}{\rho_{\mathbb{H}}}
\newcommand{\Hbar}{\overline{\mathbb H}}

\title{Makarov's principle for the Bloch unit ball}
\author{Oleg Ivrii and Ilgiz Kayumov}
\date{February 18, 2017}

\begin{document}

\maketitle

\begin{abstract}
Makarov's principle relates three characteristics of Bloch functions that resemble the variance
of a Gaussian: asymptotic variance, the constant in Makarov's law of iterated logarithm and the second derivative of the integral means spectrum at the origin.
While these quantities need not be equal in general, we show that the universal bounds agree if we take the supremum over the Bloch unit ball.
For the supremum (of either of these quantities), we give the estimate  $\Sigma^2_{\mathcal B} < \min(0.9, \Sigma^2)$, where $\Sigma^2$ is the analogous quantity associated to the unit ball in the
$L^\infty$ norm on the Bloch space.
This improves on the upper bound in Pommerenke's estimate
$0.685^2 < \Sigma^2_{\mathcal B} \le 1$.
\end{abstract}

\section{Introduction}
The Bloch space consists of analytic functions in the unit disk for which
\begin{equation*}
 \|b\|_{\mathcal B}:=\sup_{z \in \mathbb{D}} (1-|z|^2) |b'(z)| < \infty,
\end{equation*}
while a function $b_0$ belongs to the little Bloch space $\mathcal B_0$ if
$$\label{lbloch}
\lim_{|z|\to 1^-}(1-|z|^2)|b_0'(z)|=0.
$$
Makarov's principle \cite{ivrii-mak} is concerned with three characteristics of functions $b \in \mathcal B / \mathcal B_0$ that measure the growth of $b$ near the unit circle:
\begin{itemize}
\item The {\em asymptotic variance}
\begin{equation}
\label{eq:av}
\sigma^2(b) = \limsup_{r\to1} \frac{1}{2\pi |\log(1-r)|} \int_{|z|=r} |b(z)|^2 \, |dz|.
\end{equation}
\item The {\em LIL constant}
\begin{equation}
\label{eq:lil}
C_{\LIL}(b) =  \esssup_{\theta \in [0,2\pi)} \ \biggl\{
\limsup_{r \to 1} \frac{ |b(re^{i\theta})|}{\sqrt{\log \frac{1}{1-r} \log \log\log  \frac{1}{1-r}}} \biggr\}.
\end{equation}
\item The {\em integral means spectrum}
\begin{equation}
\beta_b(\tau) = \limsup_{r \to 1} \frac{1}{ |\log(1-r)|} \cdot \log \int_{|z|=r} \bigl |e^{\tau b(z)} \bigr | \, |dz|, \qquad \tau \in \mathbb{C}.
\end{equation}
\end{itemize}
The above quantities are unrelated for general Bloch functions, see \cite{BaMo, le-zinsmeister} for interesting examples. Nevertheless,
when one takes the supremum over natural classes of Bloch functions, the universal bounds coincide. In this paper, we prove Makarov's principle for the
Bloch unit ball:
\begin{theorem}
\label{main-thm}
$$
\Sigma^2_{\mathcal B}  \, := \, \sup_{\|b\|_{\mathcal B} \le 1} \sigma^2(b) \, = \, \sup_{\|b\|_{\mathcal B} \le 1} C_{\LIL}^2(b) \, = \,
\lim_{\tau \to 0} \frac{4}{|\tau|^2} \cdot \sup_{\|b\|_{\mathcal B} \le 1} \beta_b(\tau).
$$
\end{theorem}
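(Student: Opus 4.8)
The guiding picture is to read $b(re^{i\theta})$ as a stochastic process in the time variable $t=\log\frac{1}{1-r}$, whose increments are controlled by the Bloch bound $(1-|z|^2)|b'(z)|\le 1$. In this dictionary the three quantities are exactly the three Gaussian characteristics: $\sigma^2(b)$ is the variance rate, $C_{\LIL}(b)$ is the square root of the law-of-iterated-logarithm constant, and $\frac{4}{|\tau|^2}\beta_b(\tau)$ is the rescaled cumulant generating function. For a genuine Gaussian field these coincide, and the theorem asserts that the extremal Bloch functions are asymptotically Gaussian. The first thing I would record is the algebraic reason behind the constant $4$: since $b^2$ is analytic, the mean value property gives $\frac{1}{2\pi}\int_{|z|=r} b^2\,|dz| = b(0)^2$ for every $r$, so (writing $\langle\,\cdot\,\rangle$ for the normalized circle average) the averages $\langle(\re b)^2\rangle$ and $\langle(\im b)^2\rangle$, once divided by $|\log(1-r)|$, become asymptotically equal, each $\tfrac12\sigma^2(b)$, while $\langle\re b\,\im b\rangle$ is negligible. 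Hence $\langle(\re\tau b)^2\rangle\sim\frac{|\tau|^2}{2}\sigma^2(b)\,|\log(1-r)|$, which is the origin of the factor $\tfrac14$. Using the symmetries $b\mapsto e^{i\phi}b$ of the ball I would also reduce the integral-means quantity to real $\tau$.

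The proof then splits into two matching pairs. For the upper bounds $\lim\frac{4}{|\tau|^2}\sup_b\beta_b(\tau)\le\Sigma^2_{\mathcal B}$ and $\sup_b C_{\LIL}^2(b)\le\Sigma^2_{\mathcal B}$, I would establish \emph{universal} estimates valid for every $b$ in the ball. For the integral means this is a subgaussian bound: one localizes the circle average into hyperbolically separated cells, estimates the exponential moment cell by cell using that the local variance rate never exceeds $\Sigma^2_{\mathcal B}$, and multiplies, obtaining $\beta_b(\tau)\le\frac{|\tau|^2}{4}\Sigma^2_{\mathcal B}+o(|\tau|^2)$ uniformly. For the LIL this is Makarov's law of the iterated logarithm in sharp form, where the a.e.\ radial growth of $|b|$ is governed by the same variance rate (the complex modulus absorbs the coordinate factor $\tfrac12$, turning the Gaussian $2\sigma_0^2$ into $\sigma^2$).

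For the lower bounds I would produce, for each $\varepsilon>0$, a single family of near-extremal functions that simultaneously witnesses all three. Starting from $b_0$ with $\|b_0\|_{\mathcal B}\le 1$ and $\sigma^2(b_0)>\Sigma^2_{\mathcal B}-\varepsilon$, I would ``Gaussianize'' its boundary behavior---for instance by passing to a lacunary, independent-increments model, or by an ergodic inner-function averaging that replaces the $\limsup$ in $r$ by a genuine limit---so that along the relevant circles $b$ behaves like a Gaussian field of the same variance rate. For such a model the cumulant generating function is exactly Gaussian, so $\beta_b(\tau)=\frac{|\tau|^2}{4}\sigma^2(b)$, while the classical LIL gives $C_{\LIL}^2(b)=\sigma^2(b)$; both then exceed $\Sigma^2_{\mathcal B}-\varepsilon$. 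Letting first $\tau\to 0$ (for the integral-means statement) and then $\varepsilon\to 0$ closes the two equalities.

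The crux---and the reason these quantities famously differ for individual functions---is that the limits $\tau\to 0$ and $r\to 1$ do not commute, so the Gaussian matching is recovered only after the supremum over the ball is taken \emph{first}. Concretely, the main obstacle is the two-sided control of the exponential moment: the universal subgaussian upper bound must be shown to carry exactly the rate $\Sigma^2_{\mathcal B}$ rather than some larger constant, and the Gaussianized examples must be honestly realizable inside the Bloch ball with no loss in variance. Everything else---the decoupling of $\re b$ and $\im b$, the symmetry reductions, and the bookkeeping of iterated logarithms---is routine once this matching is in place.
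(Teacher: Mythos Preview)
Your skeleton --- two matching pairs of inequalities, upper bounds via universal cell-by-cell estimates and lower bounds via a near-extremal ``Gaussianized'' example --- is the right shape, and you correctly flag the crux. But the proposal stops exactly where the proof begins. The two claims you need are (a) that the \emph{local} variance rate in every hyperbolic cell is bounded by $\Sigma^2_{\mathcal B}$, not merely by $\|b\|_{\mathcal B}^2=1$, and (b) that some near-extremizer can be ``Gaussianized'' while staying in the Bloch unit ball with no loss of variance. Neither is routine: $\Sigma^2_{\mathcal B}$ is a priori only a global $\limsup$-average and there is no reason individual cells should inherit it; and your lacunary or inner-function models are not shown to land back in the ball with the same $\sigma^2$. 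You acknowledge both as the ``main obstacle'' but give no mechanism, so as written this is an outline, not a proof.

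The paper supplies a single device that handles both at once, and it is quite different from Gaussianization: periodization of Beltrami coefficients. One writes $b=\mathcal S^{\#}\mu$ with $|\mu|\le\chi_{\overline{\mathbb H}}$, restricts $\mu$ to one $n$-adic box, and extends periodically over the $n$-adic grid. Locality of the Beurling transform plus the isoperimetric property of $|dz|^2/y$ shows the box average of $|2b'/\rho_{\mathbb H}|^2$ is essentially preserved, and a smoothing near the box boundaries brings $\|\mathcal S^{\#}\mu_{\per}\|_{\mathcal B}$ back below $1+\varepsilon$. The resulting \emph{Box Lemma} says: for $n$ large, every $n$-adic box average is at most $\Sigma^2_{\mathcal B}+\varepsilon$, and conversely some periodic $\mu$ has every box average at least $\Sigma^2_{\mathcal B}-\varepsilon$. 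This is exactly your missing (a) and (b): the upper local-variance bound is the first half, and the role of your Gaussianized example is played by the periodic $\mathcal S^{\#}\mu_{\per}$, for which the three characteristics genuinely coincide (this is established in the companion paper on Makarov's principle). The link to the three quantities then goes through Makarov's $n$-adic martingale, whose local variance $\var_I B/\log n$ matches the box average up to $O(1/\sqrt{\log n})$, so all three are pinched between $\Sigma^2_{\mathcal B}\pm\varepsilon$.
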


We also give two different upper bounds for  $\Sigma^2_{\mathcal B}$. The first upper bound is an explicit estimate, while the second upper bound
is in terms of an analogous quantity associated to the unit ball of $\mathcal B/\mathcal B_0$ equipped with the ``$L^\infty$ norm.''

\begin{theorem}
\label{main-thm2}
$$
\Sigma^2_{\mathcal B} < \min(0.9, \Sigma^2).
$$
\end{theorem}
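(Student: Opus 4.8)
The plan is to prove the two bounds $\Sigma^2_{\mathcal B}<0.9$ and $\Sigma^2_{\mathcal B}<\Sigma^2$ separately. By Theorem \ref{main-thm} it suffices to bound any one of the three characteristics, and for the explicit estimate the most convenient is the asymptotic variance itself. Writing $P(r)=\frac{1}{2\pi}\int_0^{2\pi}|b(re^{i\theta})|^2\,d\theta$, one has $\sigma^2(b)=\limsup_{r\to1}P(r)/|\log(1-r)|$, and the identity $\Delta|b|^2=4|b'|^2$ together with Green's formula gives
$$rP'(r)=\frac{2}{\pi}\int_{|z|<r}|b'(z)|^2\,dA(z).$$
So everything reduces to estimating the energy $\int_{|z|<r}|b'|^2\,dA$ under the constraint $(1-|z|^2)|b'(z)|\le1$. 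Inserting the pointwise bound $|b'|^2\le(1-|z|^2)^{-2}$ reproduces Pommerenke's estimate $rP'(r)\le 2r^2/(1-r^2)$, whence $P(r)\le\log\frac{1}{1-r^2}+O(1)$ and $\sigma^2(b)\le1$.

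To break below $1$ one must exploit that this pointwise bound is never saturated on average once $b'$ is holomorphic: $\log|b'|$ is subharmonic, while $-\log(1-|z|^2)$ is \emph{strictly} subharmonic, and away from the zeros of $b'$ a direct computation gives, with $\Delta_{\mathrm{hyp}}:=\tfrac14(1-|z|^2)^2\Delta$,
$$\Delta_{\mathrm{hyp}}\log\bigl[(1-|z|^2)^2|b'(z)|^2\bigr]=-2.$$
Thus $\psi:=\log[(1-|z|^2)^2|b'|^2]\le0$ is a hyperbolically superharmonic function of constant defect $-2$, so the defect $D(z):=1-(1-|z|^2)^2|b'(z)|^2\ge0$ is bounded below on hyperbolic average and cannot remain close to $0$. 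I would convert this curvature identity into a quantitative reduction of $\int_{|z|<r}|b'|^2$ by introducing an auxiliary (Bellman-type) function $\Phi$ of the pair $(1-|z|^2)^2|b'|^2$ and $|z|$, designed so that $\frac{d}{dr}\int_{|z|=r}\Phi\,|dz|$ absorbs the defect and closes a differential inequality $P(r)\le c\log\frac{1}{1-r}+O(1)$ with an explicit $c<0.9$, the value of $c$ emerging from a one-variable optimization that verifies the requisite pointwise inequality for $\Phi$. \textbf{The main obstacle} is precisely here: the gradient bound alone is saturated in the limit by the radial profile $|b'|=(1-|z|^2)^{-1}$, which is not realizable by any holomorphic function, so any argument that sees only $|b'|\le(1-|z|^2)^{-1}$ is stuck at the constant $1$; feeding the curvature identity into the energy estimate \emph{sharply enough} to reach $0.9$, while keeping the computation explicit, is the delicate step, and the correct choice of the weight $\Phi$ (equivalently, of the convexity to impose) is the part I expect to be hardest.

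For the comparison $\Sigma^2_{\mathcal B}<\Sigma^2$, I would first note that the $L^\infty$-type norm on $\mathcal B/\mathcal B_0$ is dominated by the Bloch norm, since it records only the essential size of the boundary dilatation rather than its supremum over the disk; hence the Bloch unit ball is contained in the $L^\infty$ unit ball, the extremal problem defining $\Sigma^2$ ranges over a strictly larger class, and the soft inequality $\Sigma^2_{\mathcal B}\le\Sigma^2$ follows at once. To upgrade this to a strict inequality — note we cannot appeal to $\Sigma^2_{\mathcal B}<0.9$, as $\Sigma^2$ is not known to exceed $0.9$ — I would show that the supremum defining $\Sigma^2_{\mathcal B}$ is not attained on the Bloch sphere: by a normal-families argument a maximizing sequence would subconverge to a holomorphic $b$ saturating $(1-|z|^2)|b'|\equiv1$ on a set of full hyperbolic density, which the curvature identity above forbids. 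Since the extremal configurations for $\Sigma^2$ are permitted to violate the interior Bloch bound, the Bloch constraint is strictly active and a genuine gap appears; the remaining obstacle is again quantitative, namely ruling out a maximizing sequence that preserves variance $\Sigma^2$ while its mass escapes toward the boundary, which reduces once more to the curvature defect forcing a uniform loss.
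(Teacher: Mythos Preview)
Your opening computation $rP'(r)=\frac{2}{\pi}\int_{|z|<r}|b'|^2\,dA$ is correct and is essentially the paper's starting point as well (it is the disk form of the McMullen identity (\ref{eq:mcm-ca}) and underlies Lemma~\ref{alpha-lemma}). But from there the two parts of your proposal each have a genuine gap, and in neither case is the difficulty merely technical.

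\medskip
\noindent\textbf{The bound $\Sigma^2_{\mathcal B}<0.9$.} You propose to feed the curvature identity $\Delta_{\hyp}\log\bigl[(1-|z|^2)^2|b'|^2\bigr]=-2$ into a Bellman-type weight $\Phi$ and close a differential inequality with an explicit $c<0.9$. No candidate $\Phi$ is offered, no optimization is carried out, and there is no mechanism in the outline that singles out the number $0.9$ (or any number below $1$). The curvature defect does force \emph{some} improvement over Pommerenke's $c=1$, but turning ``some'' into ``$0.9$'' is the entire content of the theorem on this side. The paper proceeds by a completely different and concrete route: it first shows (Lemma~\ref{alpha-lemma}) that $\sigma^2(b)$ is dominated by the hyperbolic average $\alpha(R)=\sup_b\fint_{B_{\hyp}(0,R)}|2b'/\rho|^2\rho^2\,|dz|^2$, and then bounds $\alpha(R)$ at a single explicit radius via \emph{coefficient estimates} for $b'=\sum q_kz^k$. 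The ingredients are Cauchy's estimate (\ref{eq:cauchy}), Parseval, and a reduction to the one-parameter family of extremals $b=(3/4)\sqrt{3}\,S_a^2$ for functionals of $|q_0|,|q_1|$; a numerical evaluation at the Euclidean radius $r=0.4$ then yields the value $0.8998$. None of this is visible in your outline, and your Bellman scheme, as stated, is not a proof but a hope.

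\medskip
\noindent\textbf{The bound $\Sigma^2_{\mathcal B}<\Sigma^2$.} Your soft inequality $\Sigma^2_{\mathcal B}\le\Sigma^2$ is right in spirit, but the justification ``the $L^\infty$ norm records only the boundary dilatation'' is not an argument; one actually needs a representation $b=P\mu+b_0$ with $\|\mu\|_\infty\le\|b\|_{\mathcal B}$, which is not automatic. More seriously, your route to \emph{strictness} has a logical gap: showing that the supremum defining $\Sigma^2_{\mathcal B}$ is not attained on the Bloch sphere does \emph{not} by itself yield $\Sigma^2_{\mathcal B}<\Sigma^2$, since both sides are suprema and a priori nothing prevents a maximizing sequence in the Bloch ball from approaching the $L^\infty$ supremum. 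You recognize this (``mass escapes toward the boundary'') but do not close it. The paper closes it in one stroke by proving a \emph{uniform} norm comparison: every $b$ in the Bloch unit ball can be written as $P\mu+b_0$ with $\|\mu\|_\infty\le C<1$ and $b_0\in\mathcal B_0$ (Lemma~\ref{bergman-representation}). The strict constant $C<1$ comes from a duality argument together with Lemma~\ref{lusin-privalov}, which is exactly your curvature/normal-families observation packaged so that it yields a uniform gap: in every large hyperbolic ball there is a definite sub-ball on which $|2b'/\rho|<1/2$, so a definite fraction of the mass of any degenerating Hamilton sequence is wasted. That uniformity is what your sketch is missing.
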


Previously, it was known that
 $C_{\LIL}(b) \le  \|b\|_{\mathcal B}$ which was first established by Pommerenke \cite{Pomm2} in 1985 who used an iterative scheme involving Hardy's identity.
Two other proofs of this fact appeared since then: Ba\~nuelos \cite{Ba} came up with a clever argument based on an $L^\infty$ estimate for the Littlewood-Paley $g_*$ function while Lyons \cite{lyons}  used hyperbolic Brownian motion.
We can also mention the efforts of Przytycki \cite{przytycki} who proved the weaker statement $C_{\LIL}(b) \le \frac{16}{\log 2} \, \|b\|_{\mathcal B}$ by emulating a method of Philipp and Stout for lacunary trigonometric series, as well as Makarov's original result \cite{makarov85} which says that $C_{\LIL}(b) \le C\|b\|_{\mathcal B}$ holds with some constant $C>0$.
Nevertheless, the question
whether ``$C_{\LIL}(b) \le  \|b\|_{\mathcal B}$'' was sharp remained open. The above theorem answers this question in the negative.

For the lower bound, Pommerenke  gave an example of a Bloch function with  $C_{\LIL}(b) \le  0.685\, \|b\|_{\mathcal B}$, see \cite[Theorem 8.10]{Pomm}.

\subsection[The L\string^infinity norm on the Bloch space]{The $L^\infty$ norm on the Bloch space}

The original statement of Makarov's principle from \cite{ivrii-mak} deals with another unit ball which is more natural from the point of view of quasiconformal geometry.
 There, $\mathcal B/\mathcal B_0$ is equipped with the ``$L^\infty$ norm'' coming from the representation $B = P(L^\infty)$ via the
 {\em Bergman projection}
\begin{equation}
\label{eq:bergman-def}
 P\mu(z)=\frac{1}{\pi} \int_{\mathbb{D}}\frac{\mu(w)}{(1-z\overline{w})^2} \, |dw|^2.
\end{equation}
That is,
\begin{equation*}
%\label{eq:linf-quotient}
\| b \|_{\mathcal B/\mathcal B_0, \infty} := \inf_{P\mu \sim b} \| \mu \|_\infty,
\end{equation*}
 where the infimum is taken over all $\mu \in L^\infty(\mathbb{D})$ such that $P\mu = b + b_0$ with $b_0 \in \mathcal B_0$.
When one takes the supremum over the $(\mathcal B/\mathcal B_0, \infty)$ unit ball, one obtains a different constant:
\begin{equation}
\label{eq:sigma-def}
\Sigma^2 \, := \,  \sup_{\|\mu\|_\infty \le 1} \sigma^2(P\mu) \, = \, \sup_{\|\mu\|_\infty \le 1} C_{\LIL}^2(P\mu)
\, = \, \lim_{\tau \to 0} \frac{4}{|\tau|^2} \cdot \sup_{\|\mu\|_\infty \le 1} \beta_{P\mu}(\tau).
\end{equation}
The quantitity $\Sigma^2$ is naturally related to the problem of {\em dimensions of quasicircles}\/. If $D(k)$ is the maximal dimension of a $k$-quasicircle,
then according to \cite{AIPP, hedenmalm, qcdim},
$$
D(k) = 1 + \Sigma^2 k^2 + o(k^2), \qquad 0 < k < 1,
$$
$$
0.879 < \Sigma^2 < 1.
$$
%For a bound on $D(k)$ valid for all $k$, see the work of Smirnov \cite{smirnov}.
The proof of Theorem \ref{main-thm} uses {\em fractal approximation techniques} and is quite similar to that of (\ref{eq:sigma-def}), but requires some modifications which we describe in this paper. In fact, the argument is applicable with any reasonable
norm on  $\mathcal B/\mathcal B_0$, for instance,
\begin{equation}
\label{eq:alternative-norms}
\| b \|_{\mathcal B/\mathcal B_0, m} = \limsup_{|z| \to 1} (1-|z|^2)^m |b^{(m)}(z)|, \qquad m \ge 2,
\end{equation}
lead to the constants $\Sigma^2_{\mathcal B, m}$.
The details will be given in Section \ref{sec:box}.

\subsection[Why Sigma\string^2(Bloch) < 1?]{Why $\Sigma^2_{\mathcal B} < 1$?}

We now describe the idea behind the bound ``$\Sigma^2_{\mathcal B} \le 0.9$.''
Suppose $b$ is a Bloch function of norm 1. Since $b$ is a holomorphic function, it is reasonable to expect that the {\em Bloch quotient} $|2b'/\rho| = |b'(z)|(1-|z|^2)$ is strictly less than 1 on average, where $\rho(z) = \frac{2}{1-|z|^2}$ is the density of the hyperbolic metric on the unit disk.
One way to make this precise is to say that
\begin{equation}
\label{eq:goal-bb}
 \alpha(R) \, := \,
  \sup_{\|b\|_{\mathcal B} \le 1} \biggl [ \, \sup_{B}  \fint_{B} \, \biggl |\frac{2b'}{\rho}(z) \biggr |^2 \, \rho^2|dz|^2 \, \biggr] \, < \, 1,
\end{equation}
where the inner supremum is taken over all balls  $B \subset \mathbb{D}$ of hyperbolic radius $R$.
Here, the notation $\fint \dots \rho^2|dz|^2$ suggests that we consider the average in the hyperbolic metric. We will later give a quantitative estimate for $\alpha(R)$, but in order to
prove (\ref{eq:goal-bb}), the following observation is sufficient:

\begin{lemma}
\label{lusin-privalov}
Suppose $b \in \mathcal B$ is a Bloch function with $\|b\|_{\mathcal {B}}=1$. There exists $R, S > 0$ such that
 any ball $B= B_{\hyp}(z, R) \subset \mathbb{D}$ of hyperbolic radius $R$ contains a ball $B_{\hyp}(\zeta, S)$ on which the Bloch quotient is less than $1/2$.
\end{lemma}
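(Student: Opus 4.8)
The plan is to argue by contradiction using a renormalization (blow-up) argument that reduces the statement to a rigidity property of the Bloch quotient. Two facts drive the proof. First, the Bloch quotient is invariant under precomposition with disk automorphisms: for every $\phi \in \aut(\mathbb{D})$ one has $|2(b\circ\phi)'/\rho|(z) = |2b'/\rho|(\phi(z))$, which is immediate from the Schwarz--Pick identity $(1-|z|^2)|\phi'(z)| = 1-|\phi(z)|^2$. Second, no holomorphic function can have its Bloch quotient bounded below by a positive constant on all of $\mathbb{D}$; this rigidity is what gives the argument its name, and it is the conceptual heart of the proof.

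Suppose the conclusion fails. Since we are free to choose both $R$ and $S$, the negation provides, for each $n$, a hyperbolic ball $B_{\hyp}(z_n, n)$ with the property that every sub-ball $B_{\hyp}(\zeta, 1/n) \subset B_{\hyp}(z_n, n)$ contains a point where $|2b'/\rho| \ge 1/2$. I would precompose with an automorphism $\phi_n$ carrying $0$ to $z_n$ and set $b_n = b\circ\phi_n - (b\circ\phi_n)(0)$; these are Bloch functions of norm $1$ fixing the origin, and since $\phi_n$ is a hyperbolic isometry mapping $B_{\hyp}(0,n)$ onto $B_{\hyp}(z_n,n)$, the invariance of the Bloch quotient transports the bad-ball property to $b_n$: every $B_{\hyp}(\zeta,1/n)\subset B_{\hyp}(0,n)$ contains a point with $|2b_n'/\rho|\ge 1/2$.

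By the standard growth estimate for Bloch functions vanishing at the origin, $\{b_n\}$ is a normal family, so after passing to a subsequence $b_n \to b_\infty$ locally uniformly, with $\|b_\infty\|_{\mathcal B}\le 1$ and $b_n' \to b_\infty'$ (hence $|2b_n'/\rho| \to |2b_\infty'/\rho|$) locally uniformly. I would then pass the bad-ball property to the limit: fixing any $w\in\mathbb{D}$, for large $n$ the ball $B_{\hyp}(w,1/n)$ lies inside $B_{\hyp}(0,n)$ and so contains a point $w_n$ (necessarily $w_n \to w$) with $|2b_n'/\rho|(w_n)\ge 1/2$; local uniform convergence then gives $|2b_\infty'/\rho|(w)\ge 1/2$. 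As $w$ is arbitrary, the limit function satisfies $|2b_\infty'/\rho|\ge 1/2$ everywhere on $\mathbb{D}$.

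It remains to rule this out. Since $|2b_\infty'/\rho| = (1-|z|^2)|b_\infty'|\ge 1/2$, the derivative $b_\infty'$ is zero-free, so $\log|b_\infty'|$ is harmonic on $\mathbb{D}$ and its circular means all equal the finite constant $\log|b_\infty'(0)|$. On the other hand $\log|b_\infty'(re^{i\theta})| = \log|2b_\infty'/\rho|(re^{i\theta}) - \log(1-r^2) \ge \log\tfrac12 - \log(1-r^2)$, which tends to $+\infty$ uniformly in $\theta$ as $r\to 1$; hence the circular means blow up, contradicting their constancy. This proves the lemma. The step I expect to require the most care is the passage to the limit: one must let $R\to\infty$ and $S\to 0$ \emph{simultaneously} so that the qualitative condition ``every sub-ball meets the set $\{|2b'/\rho|\ge 1/2\}$'' sharpens, in the limit, into the pointwise bound $|2b_\infty'/\rho|\ge 1/2$ on all of $\mathbb{D}$, rather than the weaker conclusion that this set is merely coarsely dense. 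Once the everywhere-bound is secured, the rigidity step is short.
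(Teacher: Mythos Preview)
Your proof is correct and follows essentially the same route as the paper's: a normal-families compactness argument producing a limit Bloch function whose quotient is bounded below on all of $\mathbb{D}$, followed by a rigidity step. The paper differs only cosmetically---it invokes the Lipschitz property of the Bloch quotient to secure a uniform lower bound $|2b_n'/\rho|>1/10$ on $B_{\hyp}(0,n)$ \emph{before} passing to the limit (in place of your shrinking-ball argument), and it phrases the rigidity as the maximum modulus principle applied to $1/b_\infty'$ rather than via the mean-value property of $\log|b_\infty'|$.
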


\begin{proof}
Since the Bloch quotient is invariant under automorphisms of the disk, it is enough to consider the case when $B$ is centered at the origin. %Recall that Bloch functions are Lipschitz in the hyperbolic metric.
If the lemma were false, then by the Lipschitz property of Bloch functions, there would exist a sequence of functions $b_n$ in the Bloch unit ball with $|2b_n'/\rho| > 1/10$ on $B_{\hyp}(0, n)$. A normal families argument would produce a Bloch function $b$ for which the Bloch quotient was strictly bounded away from 0 on the entire disk.
This would contradict the maximum modulus principle applied to $1/b'$.
\end{proof}

In Section \ref{sec:coefficients}, we will show that $\alpha(R)$ dominates the asymptotic variance and find a value of $R$ for which $\alpha(R) \le 0.9$.

\begin{lemma}
\label{alpha-lemma}
\begin{equation}
\label{eq:alpha-lemma}
\sup_{\|b\|_{\mathcal B} =1} \sigma^2(b) \, \le \, \inf_{R > 0} \alpha(R).
\end{equation}
\end{lemma}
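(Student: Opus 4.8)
The plan is to pass from the circular means of $b$ to a weighted hyperbolic integral of the squared Bloch quotient, and then to exploit the definition of $\alpha(R)$ through a self-adjoint hyperbolic averaging operator. First I would normalize $b(0)=0$: since the circular mean of an analytic function equals its value at the centre, $\frac{1}{2\pi}\int_{|z|=r}|b|^2|dz| = m(r)+|b(0)|^2$ with $m(r):=\frac{1}{2\pi}\int_{|z|=r}|b-b(0)|^2|dz|$, and the additive constant $|b(0)|^2$ does not affect the $\limsup$ defining $\sigma^2(b)$. With $b(0)=0$, the Jensen--Green identity based on $\Delta|b|^2=4|b'|^2$ gives
\begin{equation*}
m(r) = \frac{2}{\pi}\int_{|z|<r}\log\frac{r}{|z|}\,|b'(z)|^2\,|dz|^2 = \frac{1}{2\pi}\int_{|z|<r}\log\frac{r}{|z|}\,\Bigl|\frac{2b'}{\rho}(z)\Bigr|^2\rho^2|dz|^2,
\end{equation*}
using $|b'|^2|dz|^2=\tfrac14|2b'/\rho|^2\rho^2|dz|^2$. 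Writing $f:=|2b'/\rho|^2$ (so $0\le f\le1$ as $\|b\|_{\mathcal B}=1$) and $w_r(z):=\frac{1}{2\pi}\log\frac{r}{|z|}\,\mathbf 1_{\{|z|<r\}}$, we have $m(r)=\int_{\mathbb D}f\,w_r\,\rho^2|dz|^2$, and a direct computation calibrates the weight: $\int_{\mathbb D}w_r\,\rho^2|dz|^2=|\log(1-r^2)|=|\log(1-r)|+O(1)$.

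Next I would introduce the hyperbolic averaging operator $A_R g(\zeta)=\fint_{B_{\hyp}(\zeta,R)}g\,\rho^2|dz|^2$. Two properties drive the argument. It is self-adjoint with respect to hyperbolic area, $\int f\,A_R w_r\,\rho^2=\int A_R f\, w_r\,\rho^2$, which follows by Fubini from the symmetry of the condition $d_{\hyp}(\zeta,z)<R$; and by the very definition (\ref{eq:goal-bb}) of $\alpha(R)$, one has $A_R f(\zeta)\le\alpha(R)$ for every $\zeta\in\mathbb D$. Combining these with $w_r\ge0$ and $0\le f\le1$,
\begin{equation*}
m(r)=\int f\,A_R w_r\,\rho^2+\int f\,(w_r-A_R w_r)\,\rho^2 \le \alpha(R)\int w_r\,\rho^2 + \int|w_r-A_R w_r|\,\rho^2.
\end{equation*}
Thus it suffices to prove that $\int_{\mathbb D}|w_r-A_R w_r|\,\rho^2|dz|^2=o(|\log(1-r)|)$; dividing by $|\log(1-r)|$ and letting $r\to1$ then gives $\sigma^2(b)\le\alpha(R)$, and taking the infimum over $R$ and the supremum over $b$ yields the Lemma.

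The heart of the matter, and what I expect to be the main obstacle, is this weight estimate. Away from the origin, $2\pi w_r=\log r-\log|z|$ differs from a harmonic function only by a constant, and here I would use the key observation that hyperbolic balls obey the mean value property for harmonic functions: if $h$ is harmonic on $B_{\hyp}(\zeta,R)$ then $A_R h(\zeta)=h(\zeta)$. This is immediate for $\zeta=0$, where the hyperbolic ball is a round disk and the hyperbolic area element is rotationally symmetric, so the circular mean value property applies; the general case follows by pulling back along the M\"obius isometry sending $\zeta$ to $0$. Consequently $A_R w_r=w_r$ \emph{exactly} on the bulk set of $\zeta$ whose ball $B_{\hyp}(\zeta,R)$ lies inside $\{0<|z|<r\}$, so the integrand of the error vanishes there. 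The difference is therefore supported on two thin regions: a fixed hyperbolic neighbourhood of the origin, contributing $O(1)$, and the collar of points within hyperbolic distance $R$ of the circle $|z|=r$. On the collar both $w_r$ and $A_R w_r$ are $O_R(1-r)$ pointwise, while the collar has hyperbolic area $O_R\bigl((1-r)^{-1}\bigr)$, so its contribution is again $O_R(1)$. Hence the total error is $O_R(1)=o(|\log(1-r)|)$, completing the proof. The delicate points are establishing the hyperbolic mean value property cleanly, since it is exactly what makes the bulk term disappear, and controlling $w_r$ and its averages uniformly across the boundary collar where the cutoff $\mathbf 1_{\{|z|<r\}}$ is active.
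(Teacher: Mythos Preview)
Your argument is correct and follows the same ``geometry of averages'' principle as the paper: express $\sigma^2(b)$ as a weighted integral of the squared Bloch quotient, use that the hyperbolic ball averages of this quotient are bounded by $\alpha(R)$, and swap the order of integration by Fubini to reduce matters to a boundary error of order $O_R(1)$.

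The execution differs in two respects. First, the paper works in the upper half-plane and invokes McMullen's identity (\ref{eq:mcm-ca}) to pass from circular means to an integral of $|2b'/\rho_{\mathbb H}|^2$ against $|dz|^2/y$ over a rectangle; you stay in the disk and derive the analogous Littlewood--Paley/Green representation yourself, with the logarithmic weight $w_r$. Second, and more interestingly, you use the observation that the hyperbolic averaging operator $A_R$ reproduces harmonic functions exactly (because hyperbolic balls are Euclidean disks with rotationally symmetric hyperbolic density about their hyperbolic centre), so that $A_R w_r = w_r$ identically on the bulk; the paper does not isolate this fact and simply absorbs everything into the $O_R(1/|\log h|)$ error. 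Your formulation makes the cancellation mechanism transparent and is arguably cleaner, at the cost of a slightly longer write-up; the paper's half-plane version is terser because McMullen's formula and the product structure of the rectangle $[0,1]\times[h,1]$ do the bookkeeping implicitly.
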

Theorem \ref{main-thm} implies that the right hand side of (\ref{eq:alpha-lemma}) also bounds the  LIL constant and the quadratic behaviour of the integral means spectrum at the origin.

\subsection{An application to harmonic measure}

To conclude the introduction, we apply Makarov's principle for the Bloch unit ball to study metric properties of  harmonic measure  of
 simply-connected domains. Let $\mathbf{S}$ denote the collection
of conformal maps $f: \mathbb{D} \to \mathbb{C}$ satisfying $f(0) = 0$ and $f'(0) =1$. The {\em Becker class}\, $\mathbf{S}_{\becker} \subset \mathbf{S}$
consists of conformal maps for which $\| \log f'\|_{\mathcal B} \le 1$. According to {\em Becker's univalence criterion}\/, it is in  bijection with functions in the Bloch unit ball with $b(0) =0$.
A theorem of Makarov \cite{makarov87},
 \cite[Theorem VIII.2.1]{GM} shows:
\begin{corollary}
\label{main-cor2}
{\em (i)} Let $f \in \mathbf{S}_{\becker}$ be a function in the Becker class, $\Omega = f(\mathbb{D})$ be the image of the unit disk and $z_0$ be a point in $\Omega$.
The harmonic measure $\omega_{z_0}$ on $\partial \Omega$ as viewed from $z_0$ is absolutely continuous with respect to the Hausdorff measure  $\Lambda_{h(t)}$,
$$h(t)=t\,\exp\left\{C\sqrt{\log\frac{1}{t}\log\log
\log\frac{1}{t}}\right\},\qquad 0<t<10^{-7},$$
for any $C \ge C_{\LIL}(\log f')$. In particular, $C = \sqrt{\Sigma^2_{\mathcal B}}$ works.

{\em (ii)} Conversely, if $C < \sqrt{\Sigma^2_{\mathcal B}}$, there exists a conformal map in $\mathbf{S}_{\becker}$ for which $\omega_{z_0} \perp \Lambda_{h(t)}$.
\end{corollary}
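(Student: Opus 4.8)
The plan is to read the corollary off Makarov's gauge theorem after inserting the law of iterated logarithm constant of $\log f'$, and then to use Theorem \ref{main-thm} to identify the universal constant and to prove sharpness. Write $b = \log f'$, so that $\|b\|_{\mathcal B}\le 1$ and $b(0)=0$ by Becker's criterion, and set $\psi(\delta)=\sqrt{\log\frac1\delta\,\log\log\log\frac1\delta}$. Since $\omega_{z_0}$ is the image of normalized length on $\partial\mathbb D$ under the boundary values of $f$, a point is $\omega_{z_0}$-typical exactly when its preimage $e^{i\theta}$ is Lebesgue-typical; hence the almost-everywhere law of iterated logarithm $\limsup_{\delta\to0}|b((1-\delta)e^{i\theta})|/\psi(\delta)\le C_{\LIL}(b)$ is available at $\omega_{z_0}$-almost every $w\in\partial\Omega$. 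Absolute continuity and singularity are then decided by the Rogers--Taylor density characterization: $\omega_{z_0}\ll\Lambda_{h}$ iff $\limsup_{t\to0}\omega_{z_0}(B(w,t))/h(t)<\infty$ for $\omega_{z_0}$-a.e.\ $w$, and $\omega_{z_0}\perp\Lambda_{h}$ iff this limsup is $+\infty$ a.e.

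For part (i) I would run Makarov's conformal dictionary. Fix $\theta$ and approach $w=f(e^{i\theta})$ radially; at radius $1-\delta$ the Koebe distortion estimates give $\operatorname{dist}(f((1-\delta)e^{i\theta}),\partial\Omega)\asymp\delta\,|f'((1-\delta)e^{i\theta})|$, so matching scales $t\asymp\delta\,|f'|$ yields $\omega_{z_0}(B(w,t))\asymp\delta$ and therefore
$$\log\frac{\omega_{z_0}(B(w,t))}{t}=-\re b((1-\delta)e^{i\theta})+O(1),\qquad \log\tfrac1t=\log\tfrac1\delta\,(1+o(1)),$$
the second relation because $\re b=o(\log\frac1\delta)$ by the law of iterated logarithm. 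Consequently
$$\log\frac{\omega_{z_0}(B(w,t))}{h(t)}=-\re b((1-\delta)e^{i\theta})-C\,\psi(\delta)\,(1+o(1))\le\bigl(C_{\LIL}(b)+\epsilon-C\bigr)\psi(\delta)+O(1)$$
for small $\delta$, which stays bounded above once $C\ge C_{\LIL}(b)$; this is the absolute continuity. Finally, since $\|\log f'\|_{\mathcal B}\le1$, Theorem \ref{main-thm} gives $C_{\LIL}(\log f')\le\sqrt{\sup_{\|b\|_{\mathcal B}\le1}C_{\LIL}^2(b)}=\sqrt{\Sigma^2_{\mathcal B}}$, so the single constant $C=\sqrt{\Sigma^2_{\mathcal B}}$ is admissible simultaneously for every $f\in\mathbf S_{\becker}$.

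For part (ii) I would apply Theorem \ref{main-thm} in the opposite direction. Given $C<\sqrt{\Sigma^2_{\mathcal B}}$, the identity $\Sigma^2_{\mathcal B}=\sup_{\|b\|_{\mathcal B}\le1}C_{\LIL}^2(b)$ furnishes a Bloch function $b$ with $\|b\|_{\mathcal B}\le1$, $b(0)=0$, and $C_{\LIL}(b)>C$; by Becker's bijection $b=\log f'$ for some $f\in\mathbf S_{\becker}$. Since now $C<C_{\LIL}(\log f')$, the singular half of Makarov's gauge theorem \cite[Theorem VIII.2.1]{GM} gives $\omega_{z_0}\perp\Lambda_{h}$ for this particular $f$, which is exactly the asserted sharpness.

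The main obstacle is the borderline case $C=C_{\LIL}(b)$. There the first-order law of iterated logarithm leaves the exponent $(C_{\LIL}+\epsilon-C)\psi(\delta)$ only barely controlled, and passing from the easy strict inequality $C>C_{\LIL}$ to equality $C\ge C_{\LIL}$ is precisely the delicate endpoint settled by Makarov's gauge theorem; I would quote that theorem at the endpoint rather than reprove the borderline density estimate. The remaining ingredients — the bookkeeping in the distortion dictionary and the almost-everywhere transfer between length on $\partial\mathbb D$ and $\omega_{z_0}$ — are routine. The genuinely new input is only the constant identification $\sup_{\|b\|_{\mathcal B}\le1}C_{\LIL}(b)=\sqrt{\Sigma^2_{\mathcal B}}$ supplied by Theorem \ref{main-thm}, which converts Makarov's $f$-dependent gauge into a single universal, and sharp, gauge constant.
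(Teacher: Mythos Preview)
Your approach matches the paper's: the corollary is not given a separate proof there but is stated as a direct consequence of Makarov's theorem \cite{makarov87}, \cite[Theorem~VIII.2.1]{GM} combined with Theorem~\ref{main-thm}, and that is precisely what you do, with the bonus of sketching the Koebe-distortion/density dictionary that the paper leaves implicit.

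One point in part~(ii) deserves tightening. You pick $b$ with $C_{\LIL}(b)>C$ and invoke ``the singular half'' of Makarov's theorem. But $C_{\LIL}(b)$ is an \emph{essential supremum} of the pointwise $\limsup$'s, so $C_{\LIL}(b)>C$ only guarantees a set of \emph{positive} Lebesgue measure where the $\limsup$ exceeds $C$; by your own Rogers--Taylor criterion this yields a singular \emph{part} of $\omega_{z_0}$, not $\omega_{z_0}\perp\Lambda_h$ outright. The clean fix is already in the paper's machinery: the near-extremizers produced by Lemma~\ref{boxcart}(ii) arise from \emph{periodic} Beltrami coefficients, and for such $b$ the three Makarov characteristics coincide and the pointwise LIL $\limsup$ is constant a.e.\ (this is the content of the martingale discussion following Lemma~\ref{boxcart}). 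Choosing one of those periodic $b$'s with $C_{\LIL}(b)>C$, rather than an arbitrary near-extremizer, gives the a.e.\ lower bound you actually need for full singularity. Since the paper's own treatment is just a citation, this is a clarification of what both you and the paper are implicitly relying on, not a difference in strategy.
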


The above corollary has a surprising consequence. As discussed in \cite{qcdim},
$$
\sup_{f \in \mathbf{S}} \sigma^2(\log f') \, \ge \, \Sigma^2 \, > \, \Sigma^2_{\mathcal B} \, 
= \, \sup_{f \in \mathbf{S}_{\becker}} \sigma^2(\log f'),
$$
which shows that functions in the Becker class are rather tame.
%, even though $\mathbf{S}_{\becker}$  contains points from the Bers boundary of the universal Teichm\"uller space.
For more bounds on $\sup_{f \in \mathbf{S}} \sigma^2(\log f')$, we refer the reader to the works \cite{kayumov, HK, AIPP}.

\subsection*{Acknowledgements}

The first author was supported by the
Academy of Finland, project nos.~271983 and 273458. The second author was supported by the RFBR and the government of the Republic of Tatarstan, project nos.~14-01-00351 and 15-41-02433.

%%%%

%\newpage

 \section{The Bergman projection}

\label{sec:background}

As mentioned in the introduction, the Bergman projection takes $L^\infty(\mathbb{D})$ to the Bloch space.
The estimate $\Sigma^2_{\mathcal B} < \Sigma^2$ is immediate
 from the following representation which is interesting in its own right:

\begin{lemma}
\label{bergman-representation}
 The Bergman projection $P: L^\infty(\mathbb{D}) \to \mathcal B/\mathcal B_0$
is surjective. Any $b \in \mathcal B$ may be represented
as
\begin{equation}
\label{eq:bergman-representation}
b = P\mu + b_0, \qquad \text{with} \quad \|\mu\|_\infty \le C \|b\|_{\mathcal B}, \qquad b_0 \in \mathcal B_0.
\end{equation}
Furthermore, the optimal constant $C$ in (\ref{eq:bergman-representation}) is strictly less than 1.
\end{lemma}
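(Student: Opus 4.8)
The plan is to prove the three assertions of Lemma~\ref{bergman-representation} in order: surjectivity of $P$ onto $\mathcal B/\mathcal B_0$, the quantitative bound $\|\mu\|_\infty \le C\|b\|_{\mathcal B}$, and finally the strict inequality $C < 1$ for the optimal constant. The first two are really the same statement, and I would establish them together by exhibiting an explicit $\mu$ depending linearly on $b$. The natural candidate comes from differentiating under the integral sign in (\ref{eq:bergman-def}): one checks that if $\mu(w) = (1-|w|^2)\,b'(w)/\overline{w}$ or, more symmetrically, a constant multiple of $(1-|w|^2)b'(w)$ paired against the reproducing kernel, then $P\mu$ recovers $b'$ up to a little-Bloch error. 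Concretely, I would recall the standard Bergman reproducing formula and its derivative form, so that $P\mu$ and $b$ have the same derivative modulo a term that vanishes as $|z|\to 1$; since $\|\mu\|_\infty = \sup_w (1-|w|^2)|b'(w)| = \|b\|_{\mathcal B}$ for this choice, we get surjectivity with constant $C\le 1$ essentially for free.

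\smallskip

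The substance of the lemma is the strict inequality $C<1$, which is what actually yields $\Sigma^2_{\mathcal B} < \Sigma^2$. The point is that the naive representation $\mu = (1-|w|^2)b'(w)$ (suitably normalized) has $\|\mu\|_\infty = \|b\|_{\mathcal B}$, but this $\mu$ is far from being the norm-minimizing preimage: it is supported with full modulus everywhere, whereas the infimum in the definition of $\|b\|_{\mathcal B/\mathcal B_0,\infty}$ is attained by a genuinely smaller symbol. My strategy is to show that we can always \emph{shrink} $\mu$ by adding to it an element of the kernel of $P$ modulo $\mathcal B_0$, thereby lowering the sup norm below $\|b\|_{\mathcal B}$ by a fixed factor independent of $b$. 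The mechanism here should be exactly the averaging phenomenon quantified by (\ref{eq:goal-bb}) and Lemma~\ref{lusin-privalov}: because the Bloch quotient $|2b'/\rho|$ cannot be close to $1$ on \emph{all} of a hyperbolic ball (it dips below $1/2$ on a sub-ball of definite size), the pointwise modulus of the natural symbol is strictly less than its supremum on a set of definite hyperbolic measure, and this slack can be redistributed via a kernel element to produce a uniformly smaller representative.

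\smallskip

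Accordingly, I would proceed as follows. First, fix the explicit symbol $\mu_0$ with $P\mu_0 \sim b$ and $|\mu_0(w)| = |2b'/\rho|(w)$. Second, invoke Lemma~\ref{lusin-privalov} to locate, inside every hyperbolic ball of radius $R$, a sub-ball $B_{\hyp}(\zeta,S)$ on which $|\mu_0| < 1/2$. Third, construct an explicit correction $\nu$ in the kernel of the Bergman projection (modulo $\mathcal B_0$) that is localized to these sub-balls and that cancels part of $\mu_0$ where $|\mu_0|$ is large, so that $\|\mu_0 + \nu\|_\infty \le C\|b\|_{\mathcal B}$ with a uniform $C<1$. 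The cleanest way to organize the third step is probably a compactness/normal-families argument mirroring the proof of Lemma~\ref{lusin-privalov}: if no uniform $C<1$ existed, one could extract a sequence $b_n$ in the Bloch unit ball whose optimal symbols had norm tending to $1$, and pass to a limit $b$ whose minimal representative had full norm everywhere, contradicting the definite slack guaranteed by the sub-ball estimate.

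\smallskip

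The main obstacle is the third step: producing a \emph{genuine} kernel element of $P$ that effects the desired cancellation without reintroducing mass elsewhere. The kernel of the Bergman projection on $L^\infty$ is large but not explicitly parametrized, so rather than writing $\nu$ by hand I expect the efficient route is the duality/variational one — characterize the optimal $C$ as an extremal problem and show the extremal symbol has modulus bounded away from $1$ on a definite portion of the disk, again via Lemma~\ref{lusin-privalov}. One must be careful that the argument is uniform over the Bloch unit ball and that the little-Bloch ambiguity $b_0$ is harmless, which is why I favor the normal-families formulation: it automatically produces the uniform constant and absorbs the $\mathcal B_0$ error into the limit.
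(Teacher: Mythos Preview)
Your plan for surjectivity is exactly the paper's: set $\mu_b(z)=(1/\overline z)\,(1-|z|^2)b'(z)$ and use the $A^2_1$ reproducing formula to get $P\mu_b=b-b(0)$, so $C\le 1$ comes for free. For the strict inequality $C<1$ you have also correctly isolated the two ingredients the paper uses, namely a duality/variational description of the optimal constant and the ``definite dip'' of the Bloch quotient furnished by Lemma~\ref{lusin-privalov}.

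Where your sketch drifts from the paper is in \emph{how} these ingredients are combined. The paper does not try to modify $\mu_b$ by an explicit kernel element, nor does it run a compactness argument on a sequence $b_n$. Instead it invokes the isometric duality $(\mathcal B,\|\cdot\|_{\mathcal B,\infty})\cong (A^1)^*$ and, crucially, its \emph{asymptotic} version: for the quotient norm modulo $\mathcal B_0$ one has
\[
\inf_{P\nu=b+b_0}\|\nu\|_\infty \;=\; \sup_{\{g_n\}}\ \limsup_{n\to\infty}\ \Bigl|\int_{\mathbb D}\mu_b\,\overline{g_n}\,|dz|^2\Bigr|,
\]
the supremum being over \emph{degenerating Hamilton sequences} $g_n\in A^1$ with $\int|g_n|=1$ and $g_n\to 0$ locally uniformly. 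Since $|\mu_b|$ essentially equals the Bloch quotient, Lemma~\ref{lusin-privalov} plus a normal-families argument on the $g_n$ (not on the $b_n$) shows that a fixed proportion of the mass $|g_n|$ must sit where $|\mu_b|<1/2$, so each pairing is bounded by a constant strictly below $1$. This is the step you are missing: the duality converts the \emph{infimum over symbols} into a \emph{supremum over test functions}, and it is the test functions, not an extremal symbol, to which the normal-families argument is applied.

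Your alternative route---extract $b_n$ with optimal constant $\to 1$ and pass to a limit $b$ whose ``minimal representative has full norm everywhere''---has a real gap: there is no obvious lower semicontinuity of $\inf_{P\nu=b+b_0}\|\nu\|_\infty$ under local-uniform convergence of $b_n$, and even if a limit $b$ exists, nothing forces its minimal representative to coincide with (or be controlled by) the Bloch quotient. The Hamilton-sequence formulation sidesteps this entirely, because one estimates a single explicit symbol $\mu_b$ against all degenerating $A^1$ sequences rather than chasing an unknown extremal $\nu$.
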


\begin{proof}
 For a Bloch function $b \in \mathcal B$, set
\begin{equation}
\label{eq:mub}
\mu_b(z) := (1/\overline{z}) \cdot 2b'/\rho.
\end{equation}
The reader may notice that $\mu_b$ is not bounded near the origin, however, only the asymptotic bound 
 $\limsup_{|z| \to 1} |\mu_b(z)| \le \| b \|_{\mathcal B}$ is essential here.
We claim that $P\mu_b(z) = b(z) - b(0)$. For this purpose, we consider
  the reproducing formula
 \begin{equation}
 \label{eq:reproducing-1}
 f(z) = \frac{2}{\pi} \int_{\mathbb{D}}  \frac{f(w)(1-|w|^2)}{(1-z\overline{w})^3} |dw|^2
 \end{equation}
 of the weighted Bergman space $A_1^2$ with norm
 \begin{equation}
\| f(z) \|_{A^2_1}^2 = \frac{2}{\pi} \int_{\mathbb{D}} |f(z)|^2 (1-|z|^2) |dz|^2,
 \end{equation}
for instance see \cite[Theorem 2.7]{zhu-spaces}. By considering dilates $f(rz)$ and taking $r \to 1^-$, it follows that (\ref{eq:reproducing-1}) holds 
for all holomorphic functions $f$ for which $$ \|f \|_{A^\infty_1} \, := \, \sup_{z \in \mathbb{D}} |f(z)| (1-|z|^2) \, < \, \infty.$$
This allows us to take $f = b'$ which gives $(P\mu_b)' = b'$, thus $P\mu_b$ and $b$ agree up to an additive constant. To evaluate this constant, note that $P\mu_b(0) = 0$ by the definition of the Bergman projection (\ref{eq:bergman-def}).
This proves the claim. 
 
%Even though $\mu_b$ is not bounded near the origin, we only need $\mu_b$ to be bounded near the unit circle.
% To rectify this, we truncate ${\tilde \mu}_b = \mu_b \cdot \chi_{r < |z| < 1}$ with $r = \frac{1}{1+\varepsilon}$. Then, $P{\tilde \mu}_b$ and $P\mu_b$ differ by a function in $\mathcal B_0$ while $\|{\tilde \mu}_b\|_\infty \le 1+\varepsilon.$

To proceed further, we use duality considerations. Without loss of generality, we can assume that $\|b\|_{\mathcal B} = 1$ and $b(0) = 0$. As is well known, e.g.~see \cite[Theorem 3.17]{zhu-spaces} or \cite{hedenmalm}, the Bloch space equipped with the $L^\infty$ norm
$\| b \|_{\mathcal B, \infty} := \inf_{P\nu = b} \| \nu \|_\infty$ is the {\em isometric} dual of the Bergman space $A^1$ with respect to the pairing
$$\langle b, g \rangle =  \lim_{r \to 1} \frac{1}{\pi} \int_{\mathbb{D}} b(z) \overline{g(rz)} \, |dz|^2, \qquad
b \in \mathcal B, \ g \in A^1.$$
Since the Bergman projection is self-adjoint, the above duality implies
$$
\inf_{P\nu = b}  \, \|\nu\|_\infty \, = \, \sup_g \, \biggl | \int_{\mathbb{D}} {\mu}_b(z) \overline{g(z)} \, |dz|^2 \biggr |, \qquad \int_{\mathbb{D}} |g(z)| = 1.
$$
Readers familiar with  asymptotic Teichm\"uller spaces (see \cite[Chapter 14.10]{GL}) will recognize that the infimum of $\|\nu\|_\infty$ over
 $\nu \in L^\infty(\mathbb{D})$ satisfying $P\nu = b + b_0$ with $b_0 \in \mathcal B_0$ and $b_0(0) = 0$ is given by
\begin{equation}
\label{eq:degenerating-hamilton}
\inf_{P\nu = b + b_0} \, \|\nu\|_\infty \, = \, \sup_{\{g_n\}} \, \limsup_{n \to \infty} 
\, \biggl | \int_{\mathbb{D}} {\mu}_b(z) \overline{g_n(z)} \, |dz|^2 \biggr |,
\end{equation}
where the supremum is taken over all {\em degenerating} Hamilton sequences $\{g_n\} \subset A^1$, i.e.~sequences with $\int_{\mathbb{D}} |g_n| = 1$ for which $g_n \to 0$ uniformly on compact subsets of the disk. 
%The $\ge$ direction follows from the fact that $\limsup_{|z| \to 1} |\mu_{b+b_0} - \mu_b| = 0$, while the $\le$ direction requires a truncation argument where $\mu_b$ is replaced by a regularized version of $\mu_b \cdot \chi_{B(0,r)}$.
%
A normal families argument and Lemma \ref{lusin-privalov} show that a definite proportion of the
mass of $|g_n|$ is ``wasted'' on the set where $|2b'/\rho| < 1/2$.
Therefore, the right hand side of (\ref{eq:degenerating-hamilton}) is bounded by a constant strictly less than 1.
\end{proof}

\begin{remark}
One can compare Lemma \ref{bergman-representation} to a result of Per\"al\"a \cite{perala} which says that
the operator seminorm $\| P \|_{L^\infty \to \mathcal B} = 8/\pi$.
% It is not difficult to see that $\| P \|_{L^\infty \to \mathcal B/\mathcal B_0} = 8/\pi$ as well.
By contrast, the optimal constant $C$ in (\ref{eq:bergman-representation}) is not known.
\end{remark}

\newpage

\subsection{Working in the upper half-plane}

While it is simpler to visualize the duality arguments in the unit disk, the upper half-plane is a more natural setting for the
fractal approximation techniques.
We are therefore led to consider the Bloch space $\mathcal B(\mathbb{H})$ which consists of holomorphic functions on $\mathbb{H}$ with
\begin{equation}
\label{eq:def-bloch}
\|b\|_{\mathcal B(\mathbb{H})} = \sup_{z \in \mathbb{H}}\, 2y \cdot |b'(z)| < \infty.
\end{equation}
Instead of using the Bergman projection, we prefer to represent Bloch functions
in $\mathbb{H}$ via the {\em modified Beurling transform} \, $\mathcal S^\#: L^\infty(\Hbar) \to \mathcal B(\mathbb{H})$,
\begin{equation}
\label{eq:beurling22}
 \mathcal S^\# \mu(z) \, =\, -\frac{1}{\pi} \int_{\Hbar} \mu(w) \biggl [ \frac{1}{(w-z)^2} - \frac{1}{w(w-1)} \biggr ] \, |dw|^2,
 \end{equation}
 which includes the term $- \frac{1}{w(w-1)}$ to guarantee convergence. We often abuse notation and write
 $$ \text{``}(\mathcal S \mu)'(z) \text{''} \, := \, (\mathcal S^{\#} \mu)' (z)  \, =\, -\frac{2}{\pi} \int_{\Hbar} \frac{\mu(w)}{(w-z)^3}  \, |dw|^2.$$
 In this setting, (\ref{eq:mub}) becomes
 \begin{equation}
  \label{eq:mub2}
\mu_b =2i \cdot \overline{b'(\overline{z})}/\rho_{\Hbar}, \qquad  \mathcal S^\#\mu_b = b + C.
 \end{equation}
 
 %% --- COMMENT -- 
 \begin{comment}
To prove  (\ref{eq:mub2}), it suffices to show that
\begin{equation}
\label{eq:reproducing-2}
b' (z)  \, =\,  \frac{4i}{\pi} \int_{\mathbb{H}} \frac{b'(w)}{(\overline{w}-z)^3}  \, \im w \, |dw|^2
\end{equation}
holds for any $b \in \mathcal B(\mathbb{H})$; all that would be left to do is to make the change of variables $w \to \overline{w}$. The intrinsic reproducing formula for the Bergman space $A^2_1(\mathbb{H})$ with norm  $\| f(z) \|_{A^2_1(\mathbb{H})} = \frac{2}{\pi} \int_{\mathbb{H}} |f(z)|^2 \, y |dz|^2$ matches (\ref{eq:reproducing-2}) for $b' \in A^2_1(\mathbb{H})$.
The general case may be settled using a limiting argument involving the truncates
$\mu_{b,n} = \mu_b \cdot \chi_{V_n}$,
where $V_n$ is an exhaustion of $\mathbb{H}$ by relatively compact open sets. The estimate provided by Lemma \ref{qbounds} below is useful here.
\end{comment}

\subsection{Locality of the Beurling transform}

For our purposes, the most important property of the Beurling transform is that it is {\em local} in nature \cite[Section 4]{qcdim}:
\begin{lemma}
\label{qbounds} {\em (i)}
Suppose $\mu$ is a Beltrami coefficient supported on the lower half-plane with $\|\mu\|_\infty \le 1$. Then,
 $\bigl |(2(\mathcal S\mu)'/\rhoH)(z) \bigr | \le 8/\pi$.

{\em (ii)} For any $\varepsilon > 0$, there exists $R > 0$ sufficiently large so that if the hyperbolic distance $d_{\Hbar}(\overline{z}, \,\supp \mu) \ge R$ then
$\bigl |(2(\mathcal S\mu)'/\rhoH)(z) \bigr | \le \varepsilon.$
\end{lemma}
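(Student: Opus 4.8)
The plan is to estimate the quantity $\bigl|(2(\mathcal S\mu)'/\rhoH)(z)\bigr| = 2y\,|(\mathcal S\mu)'(z)|$ directly from its integral representation. Differentiating (\ref{eq:beurling22}) removes the correction term, so for $z \in \mathbb H$ and $w$ ranging over the lower half-plane $\Hbar$ we have the absolutely convergent formula $(\mathcal S\mu)'(z) = -\frac{2}{\pi}\int_{\Hbar} \mu(w)/(w-z)^3\,|dw|^2$, whose kernel has no singularity since $w \ne z$ always. By the triangle inequality and $\|\mu\|_\infty \le 1$,
\begin{equation*}
\Bigl|\frac{2(\mathcal S\mu)'}{\rhoH}(z)\Bigr| \;\le\; \frac{4y}{\pi}\int_{\supp\mu} \frac{|dw|^2}{|w-z|^3}.
\end{equation*}
Everything reduces to understanding this last integral.

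The key observation is that the expression above is invariant under the real-affine maps $z \mapsto \lambda z + t$ ($\lambda>0$, $t \in \mathbb R$), which act transitively on $\mathbb H$ as hyperbolic isometries. Indeed, writing the integrand against the hyperbolic area element on $\Hbar$,
\begin{equation*}
\frac{4y}{\pi}\int \frac{|dw|^2}{|w-z|^3} = \frac{1}{\pi}\int \frac{4y\,(\im w)^2}{|w-z|^3}\cdot\frac{|dw|^2}{(\im w)^2},
\end{equation*}
one checks that the weight $4y(\im w)^2/|w-z|^3$ and the hyperbolic area element $(\im w)^{-2}|dw|^2$ are each preserved by the simultaneous substitution $z,w \mapsto \lambda z + t,\ \lambda w + t$. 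I may therefore normalize $z = i$. A direct computation using $\int_{\mathbb R}(u^2+a^2)^{-3/2}\,du = 2/a^2$ then gives $\int_{\Hbar}|w-i|^{-3}\,|dw|^2 = \int_{-\infty}^{0}\frac{2}{(1-v)^2}\,dv = 2$, so the full integral equals $\frac{4}{\pi}\cdot 2 = 8/\pi$. This already proves part~(i).

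For part~(ii), the hypothesis $d_{\Hbar}(\overline z, \supp\mu) \ge R$ means that, after normalizing $z = i$ (so $\overline z = -i$), the support is contained in the complement of the hyperbolic ball $B_{\hyp}(-i, R) \subset \Hbar$. Hence the quantity is bounded by the tail integral $\frac{4}{\pi}\int_{\Hbar \setminus B_{\hyp}(-i,R)} |w-i|^{-3}\,|dw|^2$. As $R \to \infty$ the balls $B_{\hyp}(-i,R)$ exhaust $\Hbar$, and since the total integral is finite ($=2$), monotone convergence forces this tail to tend to $0$; choosing $R$ so that it is at most $\varepsilon$ completes the proof. The only point requiring care is that the resulting $R$ must be uniform in $z$, and this is exactly what the affine invariance of the previous paragraph guarantees, reducing every case to the single base point $z = i$. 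Apart from this reduction the argument is a routine explicit integration, so I expect no serious obstacle.
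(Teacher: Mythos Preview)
Your argument is correct. The paper does not give its own proof of this lemma but simply cites \cite[Section 4]{qcdim}; your direct computation---reducing by real-affine invariance to $z=i$, evaluating the kernel integral $\int_{\Hbar}|w-i|^{-3}\,|dw|^2 = 2$ explicitly, and then using finiteness of the total integral plus exhaustion for part~(ii)---is precisely the standard approach and matches what one finds in that reference.
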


In particular, if $\mu_1$ and $\mu_2$ are two Beltrami coefficients on $\Hbar$ with $\| \mu_i\|_\infty \le 1$, $i=1,2$ that agree on
a ball $B_{\hyp}(\overline{z}, R) \subset \Hbar$ with $R$ large, then
\begin{equation}
\label{eq:locality6}
\biggl | \frac{ 2(\mathcal S\mu_1)'}{\rhoH}(z) - \frac{2( \mathcal S\mu_2)'}{\rhoH}(z) \biggr | \le 2 \varepsilon.
\end{equation}
%is small.

\subsection{Boxes and grids}

By a {\em box} in the upper half-plane, we mean a rectangle
whose sides are parallel to the coordinate axes, with the bottom side located above the real axis.
Boxes naturally arise in grids. One natural collection of grids are the {\em $n$-adic grids} \,$\mathscr G_n$, defined for integer $n \ge 2$. An {\em $n$-adic interval}
$I \subset \mathbb{R}$
is an interval of the form $I_{j,k} = \bigl [j \cdot n^{-k}, (j+1) \cdot n^{-k} \bigr ]$. To an $n$-adic interval $I$, we associate
the {\em $n$-adic box} $$\square_I \, = \, \Bigl \{ w : \re w \in I, \, \im w \in \bigl [n^{-1}|I|, \,|I| \bigr ] \Bigr \}.
$$
It is easy to see that the boxes $\square_{I_{j,k}}$ with $j, k \in \mathbb{Z}$ have disjoint interiors and their union is $\mathbb{H}$.

If $\mu$ is a Beltrami coefficient supported on the lower half-plane,
we say that $\mu$ is {\em periodic} with respect to a grid $\mathscr G$ (or rather with respect to $\overline{\mathscr G}$) if for any two
boxes $\overline{B}_1, \overline{B}_2 \in \overline{\mathscr G}$, we have
$
\mu|_{\overline{B}_1} = L^*(\mu|_{\overline{B}_2})$, where $L(z) = az+b$, $a>0$, $b\in\mathbb{R}$, is the  affine map that takes $\overline{B}_1$ to $\overline{B}_2$. We remind the reader that when computing the pullback, Beltrami coefficients are viewed as $(-1,1)$-forms.

Given $\mu$ defined on a box $\overline{B}$, and a grid $\overline{\mathscr G}$ containing $\overline{B}$, there exists a unique periodic Beltrami coefficient $\mu_{\per}$
which agrees with $\mu$ on $\overline{B}$. As discussed in \cite{ivrii-mak}, for Bloch functions $b = \mathcal S^\# \mu$ with periodic $\mu$, the three characteristics in Makarov's principle are equal.

\subsection[An isoperimetric property]{An isoperimetric property of the metric $|dz|^2/y$}

The metric $|dz|^2/y$ will play an important role in the work. The crucial feature that makes the periodization arguments work is the following isoperimetric property: if $S > 0$ is held fixed, then
\begin{equation}
\label{eq:isoperimetry}
\frac{\Area(\partial_S \square, |dz|^2/y)}{\Area(\square, |dz|^2/y)} \to 0, \qquad \square \in \mathscr G_n, \quad n \to \infty,
\end{equation}
where $\partial_S \square := \{z \in \square : d_{\mathbb{H}}(z, \partial \square) < S \}$.

Combining the isoperimetric property of $|dz|^2/y$ with (\ref{eq:locality6}), we see that if $\mu_1 = \mu_2$ agree on the reflection $\overline{\square}$ of a box
$\square \in \mathcal G_n$, then the difference
 \begin{equation}
 \label{eq:boxcart5}
\Biggl | \,
 \fint_\square \, \biggl |\frac{2(\mathcal S\mu_1)'}{\rho_{\mathbb H}}(z) \biggr |^2 \dzy \, - \,
  \fint_\square \, \biggl |\frac{2(\mathcal S\mu_2)'}{\rho_{\mathbb H}}(z) \biggr |^2 \dzy \, \Biggr | \, = \, o(1), \quad \text{as }n \to \infty.
\end{equation}
Here, we have used the elementary identity $\bigl | |a|^2 - |b|^2 \bigr | \le |a-b| \cdot |a+b|$ and Lemma \ref{qbounds}(i).

%%%%

\section{Box Lemma}
\label{sec:box}

For a Bloch function $b \in \mathcal B(\mathbb{H})$,
its {\em asymptotic variance} is given by
\begin{align}
\sigma^2_{[0,1]}(b) & = \limsup_{y \to 0^+} \, \frac{1}{|\log y|}  \int_0^1 |b(x+iy)|^2 \, dx, \\
\label{eq:mcm-ca}
 & = \limsup_{h \to 0^+} \, \frac{1}{|\log h|} \int_h^1 \int_0^1 \biggl |\frac{2b'(x+iy)}{\rhoH}\biggr |^2 \dzy.
 \end{align}
The equivalence of the two definitions is due to McMullen \cite[Section 6]{mcmullen}, for a purely upper half-plane proof, see
\cite{ivrii-mak}.
It is not difficult to show that the maximum asymptotic variance over the unit ball in $\mathcal B(\mathbb{H})$ coincides
 with the unit disk version:
$$\Sigma^2_{\mathcal B} = \sup_{\|b\|_{\mathcal B(\mathbb{H})} \le 1} \sigma^2_{[0,1]}(b).$$
The same is true for the other characteristics of Bloch functions from the introduction.
One can prove this by precomposing Bloch functions $b \in \mathcal B$ with the exponential $\xi(w) = e^{2\pi i w}$ like in \cite[Section 3]{qcdim}. We leave the details to the reader.

The proof of Makarov's principle for the Bloch unit ball rests on the following {\em Box Lemma}\,:
\begin{lemma}
\label{boxcart}
{\em (i)}
Given $\varepsilon > 0$, if $n \ge n(\varepsilon)$ is sufficiently large, then for any Bloch function $b$ with $\|b\|_{\mathcal B(\mathbb{H})} \le 1$ and $\square \in \mathscr G_n$,
 \begin{equation}
 \label{eq:boxcart}
 \fint_{\square} \, \biggl |\frac{2b'}{\rhoH}(z) \biggr |^2 \, \frac{|dz|^2}{y} \, < \, \Sigma^2_{\mathcal B} + \varepsilon.
 \end{equation}

 {\em (ii)} Conversely, for $n \ge n(\varepsilon)$ sufficiently large, there exists a Bloch function $b$ with $\|b\|_{\mathcal B(\mathbb{H})} \le 1$, which satisfies
 \begin{equation}
 \label{eq:boxcart-local-converse}
 \fint_{\square} \, \biggl |\frac{2b'}{\rho_{\mathbb{H}}}(z) \biggr |^2 \, \frac{|dz|^2}{y}  \, > \, \Sigma^2_{\mathcal B} - \varepsilon
 \end{equation}
 on every box $\square \in \mathscr G_{n}$. Furthermore,
 $b$ may be taken of the form $b = \mathcal S^\#\mu$
for some   Beltrami coefficient $\mu$ with $|\mu| \le \chi_{\Hbar}$ that is periodic with respect to the $n$-adic grid.
\end{lemma}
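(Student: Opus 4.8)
The plan is to reduce both halves of the Box Lemma to a single principle: \emph{for a Bloch function $b_{\per} = \mathcal S^\#\mu$ arising from a Beltrami coefficient $\mu$ that is periodic with respect to $\mathscr G_n$, the box average in \textup{(\ref{eq:boxcart})} is the same for every box and equals the asymptotic variance $\sigma^2_{[0,1]}(b_{\per})$}. To see this I would first check, by a direct change of variables in (\ref{eq:beurling22}), that under an affine map $L(z) = az+b$ one has a clean transformation rule making the Bloch quotient $|2(\mathcal S\mu)'/\rho_{\mathbb H}|$ invariant under the group $\Gamma = \langle z \mapsto z+1,\ z\mapsto nz\rangle$ whenever $\mu$ is $\Gamma$-periodic. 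Since $\Gamma$ acts transitively on the boxes of $\mathscr G_n$ and the normalized average $\fint_\square \cdots \frac{|dz|^2}{y}$ is affine-invariant, every box carries one common value $V$. Tiling the strip $\{0\le x\le 1,\ h\le y\le 1\}$ by boxes of $\mathscr G_n$ — each level contributing $|dz|^2/y$-area $\log n$ and the strip having total area $|\log h|$ — the McMullen form (\ref{eq:mcm-ca}) of the asymptotic variance collapses to $\sigma^2_{[0,1]}(b_{\per}) = V$, the single partial bottom level being negligible as $h\to 0$.

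Granting this engine, part (i) proceeds as follows. Given $b$ with $\|b\|_{\mathcal B(\mathbb H)}\le 1$ and a box $\square$, I periodize its coefficient $\mu_b$ from (\ref{eq:mub2}) off the reflected box $\overline\square$; note $|\mu_b|\le 1$ since $|\mu_b| = |2b'/\rho_{\mathbb H}|$ after reflection. By the mechanism behind (\ref{eq:boxcart5}) the box average of $b$ over $\square$ agrees with that of $b_{\per}=\mathcal S^\#\mu_{\per}$ up to $o(1)$, and by the first paragraph this equals $\sigma^2_{[0,1]}(b_{\per})\le \Sigma^2_{\mathcal B}\,\|b_{\per}\|_{\mathcal B(\mathbb H)}^2$. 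Part (ii) is the mirror image: I start from a near-extremal $b_0$ with $\|b_0\|_{\mathcal B(\mathbb H)}\le 1$ and $\sigma^2_{[0,1]}(b_0) > \Sigma^2_{\mathcal B}-\varepsilon/2$, use (\ref{eq:mcm-ca}) to find a scale $h$ at which the spatial average over the strip exceeds $\Sigma^2_{\mathcal B}-\varepsilon/2$, select from the tiling a single box on which the average of $b_0$ is at least this strip average, and periodize $\mu_{b_0}$ off that box. Locality (Lemma \ref{qbounds}) and the isoperimetry (\ref{eq:isoperimetry}) then force the box average of the resulting periodic $b_{\per}$ to exceed $\Sigma^2_{\mathcal B}-\varepsilon$ on every box at once, while $|\mu_{\per}|\le 1$ supplies the required representation.

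The one genuinely delicate point — and the reason the Bloch-ball case demands modifications beyond the $L^\infty$ case, where the norm $\|\mu\|_\infty$ periodizes perfectly — is the control of $\|b_{\per}\|_{\mathcal B(\mathbb H)}$. Naive periodization copies $\mu_b$ abruptly across box walls, and the nonlocal Beurling transform amplifies the resulting discontinuities: Lemma \ref{qbounds}(i) bounds the quotient there only by $8/\pi$, so $\|b_{\per}\|_{\mathcal B}$ could be of size $8/\pi$ near box boundaries, and rescaling by $\pi/8$ would be hopelessly lossy. I would remedy this by \emph{tapering}: before periodizing, multiply $\mu_b|_{\overline\square}$ by a cutoff $\psi$ equal to $1$ on the bulk of the box and decaying to $0$ across a hyperbolic collar of fixed width $S$, so the periodic extension is continuous (equal to $0$) along every wall. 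The key estimate is that if $\psi$ varies slowly on the scale $R$ of Lemma \ref{qbounds}(ii), then writing $c=\psi(\overline z)$ and comparing $\mu_{\per}$ with the global coefficient $c\,\mu_b$ on $B_{\hyp}(\overline z,R)$, linearity of $\mathcal S^\#$ together with both parts of Lemma \ref{qbounds} gives
\[ \biggl| \frac{2(\mathcal S\mu_{\per})'}{\rho_{\mathbb H}}(z) \biggr| \, \le \, c\,\biggl| \frac{2b'}{\rho_{\mathbb H}}(z) \biggr| + o(1) \, \le \, 1 + o(1), \]
uniformly in $z$ as $S/R\to\infty$ (here $b$ denotes whichever unit-ball function is being periodized). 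Thus $\|b_{\per}\|_{\mathcal B(\mathbb H)}\le 1+o(1)$, and a final rescaling by $1/(1+o(1))$ places $b_{\per}$ in the Bloch unit ball while altering the box averages only by a factor $(1+o(1))^2$. Since the collar has vanishing $|dz|^2/y$-area fraction by (\ref{eq:isoperimetry}) and the quotient is $\le 1$ on it, the tapering costs nothing in the averages, so both (\ref{eq:boxcart}) and (\ref{eq:boxcart-local-converse}) survive the limit $n\to\infty$. I expect this Bloch-norm bookkeeping across the collar — reconciling the pointwise constraint $\|\cdot\|_{\mathcal B}\le 1$ with a periodization that is necessarily discontinuous in $\mu$ — to be the main obstacle; the remaining steps are applications of the locality, isoperimetry, and invariance facts already assembled.
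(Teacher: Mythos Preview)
Your proposal is correct and follows essentially the same route as the paper: represent $b$ by $\mu_b$ as in (\ref{eq:mub2}), periodize with respect to $\mathscr G_n$, and handle the one genuine difficulty---that periodization need not preserve $\|\cdot\|_{\mathcal B(\mathbb H)}$---by tapering $\mu$ to zero across a hyperbolic collar of width $S$ with $1 \ll S \ll \log n$, using locality (Lemma~\ref{qbounds}) and isoperimetry (\ref{eq:isoperimetry}) to show the collar affects neither the Bloch norm nor the box averages by more than $o(1)$. The paper phrases part~(i) as a proof by contradiction and is terser about the tapering estimate, but your direct argument via the ``engine'' (constant box average $=\sigma^2_{[0,1]}$ for periodic $\mu$) and your comparison of $\mu_{\per}$ with $c\,\mu_b$ on a ball of radius $R\ll S$ are exactly the mechanisms the paper relies on implicitly.
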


With the help of Lemma \ref{boxcart}, the proof of Theorem \ref{main-thm} can be completed using the scheme laid out in \cite{ivrii-mak}:

\begin{enumerate}
\item Following Makarov \cite{makarov90}, a Bloch function $b \in \mathcal B(\mathbb{H})$ defines an $n$-adic martingale on $[0,1]$. Namely, for an $n$-adic interval $I \subset [0,1]$, one can define
\begin{equation}
\label{eq:mak-martingale}
B_I := \lim_{h \to 0^+} \frac{1}{|I|} \int_{I+ih} b(x+ih) dx
\end{equation}
so that if $I_1, I_2, \dots, I_n$ are the $n$-adic children of $I$, then $B_I = (1/n) \sum_{j=1}^n B_{I_j}$.
%The convergence of the limit in (\ref{eq:mak-martingale}) has been established by Makarov \cite{makarov90}.
\item Box averages (\ref{eq:boxcart}) describe the local variation of this martingale. Set
$$
\var_I B := \frac{1}{n}  \sum_{j=1}^n |B_{I_j} - B_I|^2.
$$
An  argument involving  Green's formula on $\square_I$ shows that
$$
\frac{\var_I B}{\log n} = \fint_{\square_I} \, \biggl |\frac{2b'}{\rho_{\mathbb{H}}}(z) \biggr |^2 \, \frac{|dz|^2}{y} + \mathcal O \bigl (\| b \|_{\mathcal B(\mathbb{H})}^2/\sqrt{\log n} \bigr ).
 $$
\item The local variance controls the three characteristics in Makarov's principle. Let $m = \inf_I \frac{\var_I B}{ \log n}$ and $M =\sup_I \frac{\var_I B}{\log n}$. One can show that the three characteristics
in Makarov's principle are pinched between $m$ and $M$. The key point is that the characteristics can be defined in terms of the martingale $B$, so the problem
is purely combinatorial.
\end{enumerate}

\begin{proof}[Proof of Lemma \ref{boxcart}]
(i) Assume for the sake of contradiction that there is a box $\square \subset \mathbb{H}$ and a function $b$  in the Bloch unit ball for which
 \begin{equation}
 \fint_{\square} \, \biggl |\frac{2b'}{\rhoH}(z) \biggr |^2 \dzy \, > \, \Sigma^2_{\mathcal B} + \varepsilon.
 \end{equation}
We can choose a Beltrami coefficient $\mu$ with $\mu \le \chi_{\Hbar}$ and $\mathcal S^\#\mu = b + C$, for instance
$\mu =2i \cdot \overline{b'(\overline{z})}/\rho_{\Hbar}$ will do, cf.~(\ref{eq:mub2}).
 We then form the Beltrami coefficient $\mu_{\per} \le \chi_{\Hbar}$ by restricting $\mu$ to $\overline{\square}$
 and periodizing with respect to $\overline{\mathscr G_n}$, that is, on $\overline{\square}_j \in \overline{\mathscr G_n}$, we define $\mu_{\per} = L_j^*\mu$, where $L_j(z) = az + b$
 is the unique affine mapping with $a > 0$ and $b \in \mathbb{R}$ that maps  $\overline{\square}_j$ to  $\overline{\square}$.
According to (\ref{eq:boxcart5}), we would have
 \begin{equation}
 \label{eq:boxcart7}
 \fint_{\square_j} \, \biggl |\frac{2(\mathcal S\mu_{\per})'}{\rhoH}(z) \biggr |^2 \dzy \, > \,
 \Sigma^2_{\mathcal B} + 2\varepsilon/3, \quad  \text{for all }\, \square_j \in \mathscr G_n,
  \end{equation}
  when $n$ is large.
However, this is not yet a contradiction since $\| \mathcal S^\#\mu_{\per} \|_{\mathcal B(\mathbb{H})}$ might be greater than 1.

For $z \in \overline{\square}_j$, let $h(z) = d_{\Hbar}(z, \partial \overline{\square}_j)$.
Fix a number $S > 0$ very large, but much smaller than $R = \log n$. We now modify $\mu_{\per}$ on the set
$W = \{z :h(z) < S\}.$ More precisely, we define a new Beltrami coefficient
\begin{equation}
\hat{\mu}_{\per}(z) \, = \, \left\{
\begin{array}{lr}
(h(z)/S) \cdot \mu_{\per}, & z \in W, \\
\mu_{\per}, & z \notin W.
\end{array} \right.
\end{equation}
By making $S$ large, we can make $\|\mathcal S^\#\hat{\mu}_{\per}\|_{\mathcal B(\mathbb{H})} \le 1 + \varepsilon/3$. Furthermore, if $R >\!\!> S$ is large,
 Lemma \ref{qbounds} and the isoperimetric property (\ref{eq:isoperimetry})  guarantee that
$$
\Bigl | \sigma^2_{[0,1]}(\mathcal S^\#\hat{\mu}_{\per}) - \sigma^2_{[0,1]}(\mathcal S^\#{\mu}_{\per})  \Bigr | \le \varepsilon/3.
$$
This contradicts the definition of $\Sigma_{\mathcal B}^2$ (we may divide ${\hat \mu}_{\per}$ by $1+\varepsilon/3$), so our initial assumption must have been wrong.

(ii)
Conversely, suppose $b$ is a Bloch function with $$\|b\|_{\mathcal B(\mathbb{H})} \le 1
\quad \text{and} \quad \sigma^2_{[0,1]}(b) \ge \Sigma^2 - \varepsilon/2.$$
Consider the $n$-adic grid $\mathscr G_{n}$.
By the pigeon-hole principle, there exists an $n$-adic box $\square$
for which the integral in (\ref{eq:boxcart}) is at least $\Sigma^2 - \varepsilon/2$.
Set $\nu = 2i \cdot \overline{b'(\overline{z})}/\rho_{\Hbar}$ so that $\mathcal S^\# \nu = b+C$.
Restricting $\nu$ to $\overline{\square}$ and periodizing over $n$-adic boxes produces a Beltrami coefficient $\hat \nu_{\per}$ which
satisfies
\begin{equation}
\label{eq:std-box}
 \fint_{\square_j} \, \biggl |\frac{2(\mathcal S{\hat \nu}_{\per})'}{\rho_{\mathbb{H}}}(z) \biggr |^2 \, \frac{|dz|^2}{y}  \, > \, \Sigma^2 - \varepsilon,
 \quad \text{for all }\, \square_j \in \mathscr G_{n}.
\end{equation}
This completes the proof.
\end{proof}

\begin{remark}
To see that the periodization argument works for the norms
$$
\| b \|_{\mathcal B/\mathcal B_0, m} = \limsup_{|z| \to 1} (1-|z|^2)^m |b^{(m)}(z)|, \qquad m \ge 2,
$$
it suffices to show that $\mu \to \bigl |(\mathcal S\mu)^{(m+1)}/\rhoH^m)(z) \bigr |$ satisfies a variant of Lemma \ref{qbounds}.
We leave it to the reader to generalize the argument from  \cite[Section 4]{qcdim}. With a bit more work, one can show that the above argument is applicable
to the Zygmund norm on the Bloch space. Here, one takes a Bloch function $b(z)$, forms $$f(z) = \int_0^z b(w)dw$$ and then restricts to the unit circle, cf.~\cite[Theorems II.3.4 and VII.1.3]{GM}.
A similar construction
involving fractional integration endows the Bloch space with the $C^\alpha$ norms ($0 < \alpha < 1$).
More precisely, if $b \in \mathcal B$ then $$f(z) = \int_{\mathbb{D}} \frac{b(w)}{(1-z\overline{w})^{2-\alpha}} |dw|^2$$ satisfies $\sup_{z \in \mathbb{D}} (1-|z|^2)^{1-\alpha} |f'(z)| \le C_\alpha \|b\|_{\mathcal B}$ and hence extends to a $C^\alpha$ function on the unit circle. In fact, if $b = P\mu$ with $\mu \in L^\infty(\mathbb{D})$ then Fubini's theorem shows that  $f(z) = \int_{\mathbb{D}} \frac{\mu(w)}{(1-z\overline{w})^{2-\alpha}} |dw|^2$ from which the growth bound is clear. Further, the value of the $C^\alpha$ quotient $\frac{|f(z_1)-f(z_2)|}{|z_1-z_2|^\alpha}$, $z_1, z_2 \in \mathbb{S}^1$ is determined (up to small error) by the values of $\mu$ in a neighbourhood of the Euclidean midpoint of the hyperbolic geodesic joining $z_1$ and $z_2$.
The reader interested in working out the details can examine \cite[Chapter 7]{zhu-spaces} and
\cite[Theorem II.3.2]{GM}. 

These constructions lead to the constants $\Sigma^2_{\mathcal B, m}$, $\Sigma^2_{\zyg}$ and $\Sigma^2_{C^\alpha}$.
We note however that one can construct (equivalent) norms on $\mathcal B/\mathcal B_0$ which cannot be described as  $\limsup_{|z| \to 1}$ of local behaviour.
 For these irregular norms, the periodization scheme does not work.
\end{remark}

%%%%
\newpage

\section{Coefficient estimates}
\label{sec:coefficients}

In this section, we prove Lemma \ref{alpha-lemma} and give an explicit estimate for the quantity
$$
\alpha(R)  = \sup_{\|b\|_{\mathcal B(\mathbb{H})}  \le 1} \biggl\{ \fint_{B_{\hyp}(i,R)} \, \biggl |\frac{2b'}{\rho_{\mathbb{H}}}(z) \biggr |^2 \, \rho_{\mathbb{H}}^2|dz|^2 \biggr\}.
$$
 The  proof of Lemma \ref{alpha-lemma} uses the motif of {\em geometry of averages} which says that averages of high concentration control averages of low concentration, e.g.~the $L^\infty$ norm controls the $L^1$ norm of a function.

\begin{proof}[Proof of Lemma \ref{alpha-lemma}]  % \label{eq:mcm-ca} For $z \in \mathbb{D}$, let $\eta(z) = d_{\mathbb{D}}(0,z)$ denote the hyperbolic distance from $0$ to $z$.
Suppose $b \in \mathcal B(\mathbb{H})$ is a Bloch function of norm 1.
If we average
$$
 \fint_{B_{\hyp}(z,R)} \, \biggl |\frac{2b'}{\rho_{\mathbb{H}}}(z) \biggr |^2 \, \rho_{\mathbb{H}}^2|dz|^2  \le  \alpha(R)
 $$
over $z \in [0,1] \times [h, 1]$ with respect to the measure $|dz|^2/y$, we see that the quantity (\ref{eq:mcm-ca}) is bounded by $\alpha(R) + \mathcal O_R(1/|\log h|)$ as $h \to 0^+$.
Thus, $\sigma^2_{[0,1]}(b) \le  \alpha(R)$ as desired.
\end{proof}

We proceed to give a quantitative bound for $\alpha(R)$.
For this purpose, we switch back to the unit disk and consider a Bloch function $b \in \mathcal B$ of norm 1. Let us expand $b'$ as a power series:
 $$
 b'(z)=q_0+q_1 z +q_2 z^2 + q_3 z^3 + \cdots.
 $$
 To give a bound for
 $$
 \int_{|z| \leq r} |b'(z)|^2\, |dz|^2 =2\pi \sum_{k=0}^\infty \frac{r^{2k+2}}{2k+2}|q_k|^2,
 $$
 we need to estimate the coefficients $\{q_k\}$. Since $\|b\|_{\mathcal B} \le 1$, Cauchy's estimates show:
\begin{equation}
\label{eq:cauchy}
|q_k| \leq \frac{k+2}{2} \left(\frac{k+2}{k}\right)^{k/2}, \quad k \ge 1.
\end{equation}

We estimate the first few terms together, rather than one at a time. To do this, we will use Parseval's identity and the following general
principle (see  \cite{avhkay} or \cite[Satz 3.2.1]{bonk}):
{\em To maximize a continuous and increasing functional of $|q_0|$ and $|q_1|$ in the unit ball of $\mathcal B$, it is enough to consider the special functions
$b(z)=(3/4)\sqrt{3} \, S_a(z)^2$ where
$$
S_a(z)=\frac{z+a}{1+az}, \quad a \in [0,1/\sqrt{3}].
$$
} 
For these special functions,
 \begin{equation} \label{bb2}
q_0= (3/2)\sqrt{3} a(1-a^2), \quad
q_1=(3/2)\sqrt{3}(1-a^2)(1-3 a^2).
 \end{equation}
 We remark that the parameter $a$ in our paper is denoted by $z_0$ in \cite{bonk}, also Bonk works with the Bloch function rather than its derivative, which explains the discrepancy in the factor of 2 between our $q_1$ and Bonk's $a_2$.
 
With this principle in mind, we give two estimates, one when $|q_2|$ is small, and another when $|q_2|$ is large:

\begin{lemma} \label{b1} \it  If $|q_2| \leq 2$, then  for $r \leq 0.4$, the following inequality holds:
 \begin{equation} \label{bb1}
 \frac{r^2}{2}|q_0|^2+\frac{r^4}{4}|q_1|^2+\frac{r^6}{6}|q_2|^2 \leq \frac{r^2}{2}+\frac{2}{3} r^6.
 \end{equation}
\end{lemma}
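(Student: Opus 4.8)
The plan is to treat the three terms of (\ref{bb1}) separately, exploiting the fact that the hypothesis $|q_2| \le 2$ is decoupled from the constraints on $q_0, q_1$. The term involving $|q_2|$ is immediate: since we assume $|q_2| \le 2$, we get $\frac{r^6}{6}|q_2|^2 \le \frac{r^6}{6}\cdot 4 = \frac{2}{3}r^6$, which is exactly the second summand on the right of (\ref{bb1}). Thus the entire lemma reduces to the $q_2$-free estimate $\frac{r^2}{2}|q_0|^2 + \frac{r^4}{4}|q_1|^2 \le \frac{r^2}{2}$, equivalently, after dividing by $r^2/2$, to $|q_0|^2 + \frac{r^2}{2}|q_1|^2 \le 1$. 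The point of the decoupling is that this reduced inequality involves no constraint on $q_2$, so it may be established for an \emph{arbitrary} function in the Bloch unit ball and then added to the crude $|q_2|$ bound; the two estimates need not be realized by the same function.

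To prove $|q_0|^2 + \frac{r^2}{2}|q_1|^2 \le 1$ I would invoke the extremal principle quoted above: this left-hand side is a continuous functional, increasing in $|q_0|$ and $|q_1|$, so its supremum over the unit ball of $\mathcal B$ is attained among the special functions $b(z) = (3/4)\sqrt 3\, S_a(z)^2$, $a \in [0, 1/\sqrt 3]$. Substituting (\ref{bb2}) and writing $t = a^2 \in [0, 1/3]$, one finds $|q_0|^2 = \frac{27}{4}t(1-t)^2$ and $|q_1|^2 = \frac{27}{4}(1-t)^2(1-3t)^2$, so the claim becomes the one-variable polynomial inequality $\frac{27}{4}(1-t)^2\bigl[\, t + \tfrac{r^2}{2}(1-3t)^2\,\bigr] \le 1$ for $t \in [0,1/3]$ and $r \le 0.4$.

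The key algebraic observation that makes this clean is the factorization $1 - \frac{27}{4}t(1-t)^2 = \frac14(4-3t)(1-3t)^2$, which reflects that $\frac{27}{4}t(1-t)^2$ attains its maximum value $1$ at $t = 1/3$ (the endpoint $a = 1/\sqrt 3$, where $q_1 = 0$ and $q_0 = 1$). Using it, the polynomial inequality is equivalent to $\frac{27}{4}(1-t)^2\cdot\frac{r^2}{2}(1-3t)^2 \le \frac14(4-3t)(1-3t)^2$; cancelling the common factor $(1-3t)^2$ leaves $27\cdot\frac{r^2}{2}(1-t)^2 \le 4 - 3t$. As the left side increases in $r$, it suffices to check $r = 0.4$, where $27\cdot\frac{r^2}{2} = 2.16$; then the left side is at most $2.16$ while the right side is at least $4 - 3\cdot\frac13 = 3$ throughout $[0,1/3]$, so the inequality holds with room to spare.

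The genuine content is the decoupling together with the reduction to the extremal functions; once these are in place the residual inequality is routine. I would expect the only point requiring care to be the verification that one may use the \emph{unconstrained} extremal principle for the $(q_0,q_1)$-part, which is precisely what the decoupling justifies. It is worth recording that the reduced estimate in fact persists for all $r \le \sqrt{8/27} \approx 0.54$, so the hypothesis $r \le 0.4$ is not binding here; it is the companion estimate for large $|q_2|$ that will force $r \le 0.4$. Finally, the equality case $t = 1/3$ (that is, $q_0 = 1$, $q_1 = 0$) shows that the leading constant $r^2/2$ is sharp.
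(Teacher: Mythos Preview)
Your proof is correct and follows exactly the same approach as the paper: decouple the $q_2$ term using the hypothesis $|q_2|\le 2$, then apply the extremal principle to reduce the remaining inequality $\frac{r^2}{2}|q_0|^2+\frac{r^4}{4}|q_1|^2\le \frac{r^2}{2}$ to the special functions (\ref{bb2}). The paper dismisses this last verification as ``straightforward computations,'' whereas you supply the explicit factorization $1-\tfrac{27}{4}t(1-t)^2=\tfrac14(4-3t)(1-3t)^2$ and the resulting one-line check; your observation that the reduced inequality in fact holds for all $r\le\sqrt{8/27}$ is a nice bonus.
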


\begin{proof}
Straightforward computations show that for all the special functions $S_a$,
$$
\frac{r^2}{2}|q_0|^2+\frac{r^4}{4}|q_1|^2 \leq \frac{r^2}{2}.
$$
Hence, the above estimate is true for any function in the Bloch unit ball. Together with the assumption $|q_2| \leq 2$, this gives (\ref{bb1}).
 \end{proof}

 \begin{lemma} \label{b2}  \it  If $|q_2| \ge 2$, then  for $r \leq 0.4$, we have
 \begin{equation} \label{bb3}
 \frac{r^2}{2}|q_0|^2+\frac{r^4}{4}|q_1|^2+\frac{r^6}{6}|q_2|^2+\frac{r^8}{8}|q_3|^2 \, \leq \, \frac{r^2}{2}+\frac{17}{24} r^6 +2.77 \, r^8.
 \end{equation}
 \end{lemma}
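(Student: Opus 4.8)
The plan is to separate the top coefficient $q_3$ from the block of the three leading terms and to exploit the hypothesis $|q_2|\ge 2$ in two opposite directions. For the $q_3$ term I would use $|q_2|\ge 2$ as a \emph{lower} bound, spending part of the Bloch budget on $q_2$ and thereby squeezing $q_3$; for the block $\frac{r^2}{2}|q_0|^2+\frac{r^4}{4}|q_1|^2+\frac{r^6}{6}|q_2|^2$ the same hypothesis is used in reverse, since a large $|q_2|$ is only possible when $|q_0|$ (and $|q_1|$) are small, and it is this forced deficit that keeps the block below $\frac{r^2}{2}+\frac{17}{24}r^6$.

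For the tail term I would integrate the pointwise bound $|b'(\rho e^{i\theta})|\le (1-\rho^2)^{-1}$ (which is just $\|b\|_{\mathcal B}=1$) over the circle $|z|=\rho$ and apply Parseval, getting $\sum_{k\ge 0}|q_k|^2\rho^{2k}\le (1-\rho^2)^{-2}$ for every $\rho\in(0,1)$. Dropping the nonnegative terms $|q_0|^2+|q_1|^2\rho^2$ and inserting $|q_2|^2\ge 4$ gives $|q_3|^2\le \rho^{-6}\bigl((1-\rho^2)^{-2}-4\rho^4\bigr)$, and minimizing the right-hand side over $\rho$ (the minimum sits near $\rho^2\approx 0.58$ and is slightly below $22.16$) yields $\frac{r^8}{8}|q_3|^2<2.77\,r^8$. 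This step is exactly where the \emph{lower} bound $|q_2|\ge 2$ is indispensable: the bare Cauchy estimate (\ref{eq:cauchy}) only gives $|q_3|^2\le 28.9$, so $\frac{r^8}{8}|q_3|^2\le 3.6\,r^8$, which is too weak.

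The core is therefore the three-term inequality $\frac{r^2}{2}|q_0|^2+\frac{r^4}{4}|q_1|^2+\frac{r^6}{6}|q_2|^2\le \frac{r^2}{2}+\frac{17}{24}r^6$, which I would rewrite as $\frac{r^6}{6}\bigl(|q_2|^2-\tfrac{17}{4}\bigr)\le \frac{r^2}{2}\bigl(1-|q_0|^2\bigr)-\frac{r^4}{4}|q_1|^2$, making explicit that a large $|q_2|$ must be paid for by a large norm deficit $1-|q_0|^2$. The essential difficulty is that this cannot be reduced to the sharp two-coefficient bound $\frac{r^2}{2}|q_0|^2+\frac{r^4}{4}|q_1|^2\le\frac{r^2}{2}$ of Lemma \ref{b1} combined with Cauchy's $|q_2|\le 4$: each of these is sharp, but they are never attained simultaneously, and using them together the $q_2$ term alone would contribute as much as $\frac{8}{3}r^6$, far above the allowed $\frac{17}{24}r^6$. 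One genuinely has to capture the joint constraint on the triple $(|q_0|,|q_1|,|q_2|)$. My plan is to extend the extremal principle quoted before (\ref{bb2}) from increasing functionals of $(|q_0|,|q_1|)$ to increasing functionals of $(|q_0|,|q_1|,|q_2|)$, reducing the maximization to a low-parameter extremal family, and then to verify the resulting polynomial inequality in $r$ on $[0,0.4]$ by elementary calculus.

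The main obstacle is precisely this joint estimate, and it is worth stressing why it is new. The special functions $b=(3/4)\sqrt 3\,S_a^2$ that are extremal for the $(|q_0|,|q_1|)$ problem satisfy $|q_2|\le\max_a|q_2(a)|\approx 1.81<2$, so the regime $|q_2|\ge 2$ lies \emph{entirely outside} their range; the coupling between a large $|q_2|$ and a small $|q_0|$ is thus a phenomenon invisible at the level of Lemma \ref{b1} and is what the extended principle must supply. The choice $r\le 0.4$ and the constant $\frac{17}{4}$ are calibrated so that, once the extremal configuration is identified, the block stays below $\frac{r^2}{2}+\frac{17}{24}r^6$ with the verification reducing to a single-variable optimization; adding the bound $\frac{r^8}{8}|q_3|^2<2.77\,r^8$ from the second step then gives (\ref{bb3}).
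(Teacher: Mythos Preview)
Your treatment of the $q_3$ term is exactly the paper's argument: Parseval on $|z|=s$, drop the nonnegative terms, insert $|q_2|^2\ge 4$, and optimize at $s^2=0.58$ to get $|q_3|^2\le 22.16$, hence $\frac{r^8}{8}|q_3|^2\le 2.77\,r^8$.

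For the three-term block there is a genuine gap. You correctly diagnose that a joint constraint on $(|q_0|,|q_1|,|q_2|)$ is needed and that neither the two-coefficient principle nor Cauchy's bound alone suffices. But your proposed remedy---\emph{extending} the extremal principle to increasing functionals of $(|q_0|,|q_1|,|q_2|)$---is the missing idea, not a plan. The two-coefficient result of Bonk/Avkhadiev--Kayumov rests on a specific structural argument; no three-coefficient analogue with an explicit extremal family is available, and you yourself observe that the family $(3/4)\sqrt{3}\,S_a^2$ never enters the regime $|q_2|\ge 2$, so it cannot serve as the reduced family here.

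The paper sidesteps this entirely. It proves a direct coupling $|q_2|\le 2+2\sqrt{1-|q_0|^2}$ via a Schwarz-lemma trick: set
\[
\phi(z)=\frac{1-\tau}{2}\bigl(b'(\sqrt{\tau}\,z)+b'(-\sqrt{\tau}\,z)\bigr)=(1-\tau)q_0+(1-\tau)\tau\,q_2\,z^2+\cdots,
\]
which is bounded by $1$ in $\mathbb{D}$; the Schwarz lemma applied to $\phi(\sqrt{z})$ gives $(1-\tau)\tau\,|q_2|\le 1-(1-\tau)^2|q_0|^2$, and optimizing in $\tau$ yields the bound. This \emph{decouples $q_2$ from $q_1$}: once $|q_2|$ is replaced by $2+2\sqrt{1-|q_0|^2}$, the block becomes an increasing functional of $(|q_0|,|q_1|)$ only, and the \emph{existing} two-coefficient principle reduces the verification to the one-parameter family $S_a$. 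So the paper does not extend the extremal principle at all; it eliminates $q_2$ first and then falls back on it.
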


  \begin{proof}
   By Parseval's formula,
 $$
 |q_0|^2+|q_1|^2 s^2+|q_2|^2 s^4+|q_3|^2 s^6 + \cdots \, \leq \, \frac{1}{(1-s^2)^2}, \quad s \in[0,1].
 $$
 In particular,
 $$
 |q_3|^2 s^6 \leq \frac{1}{(1-s^2)^2} - 4 s^4
 $$
 since
  $|q_2| \ge 2$ by assumption. The choice $s^2=0.58$ gives
 $
 |q_3|^2 \leq 22.16
 $ or
 $$
 \frac{r^8}{8}|q_3|^2 \leq 2.77 \, r^8.
 $$
It remains to show that
 \begin{equation} \label{bb4}
 \frac{r^2}{2}|q_0|^2+\frac{r^4}{4}|q_1|^2+\frac{r^6}{6}|q_2|^2 \leq \frac{r^2}{2}+\frac{17}{24} r^6.
 \end{equation}
For this purpose, consider the function
 $$
 \phi(z)=\frac{1- \tau}{2} \Bigl ( b'(\sqrt{\tau} z)+b'(-\sqrt{\tau} z) \Bigr ) \, = \, (1-\tau)q_0+(1-\tau) \tau \, q_2 z^2+\cdots,
 $$
 where $\tau$ is an auxiliary parameter in $[0,1]$.
 By construction, $|\phi(z)| \leq 1$ in the unit disk. Applying the Schwarz lemma to $\phi(\sqrt{z})$ gives
 $$
 (1-\tau) \tau \, |q_2| \leq 1-(1-\tau)^2|q_0|^2.
 $$
 Maximizing over $\tau \in [0,1]$, we obtain
 $$
|q_2| \leq 2+2\sqrt{1-|q_0|^2}.
$$
 Therefore to prove (\ref{bb4}), we have to show that the inequality
 $$
\frac{r^2}{2}|q_0|^2+\frac{r^4}{4}|q_1|^2+ \frac{r^6}{6}\left(2+2\sqrt{1-|q_0|^2}\right)^2 \leq \frac{r^2}{2}+\frac{17}{24} r^6, \quad r \leq 0.4,
$$
is valid for all $q_0, q_1$ of the form (\ref{bb2}), with $a \in [0,1)$. Routine computations show that this is indeed the case.
We leave the details to the reader.
\end{proof}

\begin{lemma} \label{b3} If $r=0.4$ then
$$
 \frac{(1-r^2)}{\pi r^2} \int_{|z| \leq r} |b'(z)|^2 \, |dz|^2 \leq 0.8998.
 $$
 \end{lemma}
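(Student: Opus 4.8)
The plan is to express the left-hand side through the Taylor coefficients of $b'$ and then estimate the resulting series termwise. Writing $b'(z) = q_0 + q_1 z + q_2 z^2 + \cdots$ and using the identity $\int_{|z|\le r}|b'(z)|^2\,|dz|^2 = 2\pi\sum_{k\ge 0}\frac{r^{2k+2}}{2k+2}|q_k|^2$ already recorded above, the quantity to be bounded becomes
$$\frac{(1-r^2)}{\pi r^2}\int_{|z|\le r}|b'(z)|^2\,|dz|^2 = \frac{2(1-r^2)}{r^2}\,T, \qquad T := \sum_{k\ge 0}\frac{r^{2k+2}}{2k+2}|q_k|^2.$$
At $r=0.4$ the prefactor equals $\frac{2\cdot 0.84}{0.16}=10.5$, so it suffices to prove $T \le 0.8998/10.5 = 0.085695\ldots$. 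The point to keep in mind is that the very first term alone contributes $\frac{r^2}{2}=0.08$, so all but roughly $0.0057$ of the budget is already consumed by $|q_0|^2$; the entire difficulty is to control the remaining coefficients precisely.

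To handle the low-order terms I would split into the two regimes of Lemmas \ref{b1} and \ref{b2}. If $|q_2|\le 2$, Lemma \ref{b1} bounds $\frac{r^2}{2}|q_0|^2+\frac{r^4}{4}|q_1|^2+\frac{r^6}{6}|q_2|^2$ by $\frac{r^2}{2}+\frac{2}{3}r^6$; if $|q_2|\ge 2$, Lemma \ref{b2} bounds the partial sum through the $|q_3|^2$ term by $\frac{r^2}{2}+\frac{17}{24}r^6+2.77\,r^8$. In each case it remains to add the tail $\sum_{k\ge 3}\frac{r^{2k+2}}{2k+2}|q_k|^2$ (respectively $\sum_{k\ge 4}$), for which I would invoke Cauchy's bound (\ref{eq:cauchy}) in the form $|q_k|^2\le\bigl(\tfrac{k+2}{2}\bigr)^2\bigl(\tfrac{k+2}{k}\bigr)^{k}$. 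Since $r^2=0.16$, these tail terms decrease geometrically with ratio tending to $r^2$, so I would compute the first few of them explicitly from (\ref{eq:cauchy}) and dominate the remainder by a convergent geometric series. Adding this tail to the bound supplied by the relevant lemma and multiplying by $10.5$ then gives the claim in both cases, the regime $|q_2|\le 2$ being the extremal one.

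The main obstacle is that the inequality is genuinely sharp at this value of $r$: the branch $|q_2|\le 2$ produces a number lying only marginally below $0.8998$, so the tail must not be estimated crudely. Concretely, replacing $\bigl(\tfrac{k+2}{k}\bigr)^{k}$ by its supremum $e^2$ is already too wasteful for the leading tail term $k=3$ and would by itself overshoot the available budget. I would therefore keep the sharp Cauchy values for the first tail terms (say $k=3$ and $k=4$) and only pass to the crude majorant $\bigl(\tfrac{k+2}{k}\bigr)^{k}\le e^2$ for $k\ge 5$, where the remaining series is easily summed. The surviving computations are elementary, but they have to be carried to enough precision to keep the final product strictly under $0.8998$.
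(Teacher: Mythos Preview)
Your proposal is correct and follows exactly the paper's argument: split into the cases $|q_2|\le 2$ and $|q_2|\ge 2$, apply Lemmas~\ref{b1} and~\ref{b2} to the initial coefficients, bound the tail termwise via the Cauchy estimate~(\ref{eq:cauchy}), and evaluate numerically at $r=0.4$. Your additional remarks about which branch is extremal and about not coarsening the early tail terms are accurate refinements of what the paper leaves to ``numerical computations.''
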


\begin{proof}
If $|q_2| \leq 2$, then from Lemma \ref{b1} and Cauchy's estimates (\ref{eq:cauchy}),
\begin{equation*} %\label{bb5}
\sum_{k=0}^\infty \frac{r^{2k+2}}{2k+2}|q_k|^2 \leq   \frac{r^2}{2}+\frac{2}{3} r^6+ \sum_{k=3}^\infty \frac{r^{2k+2}}{2k+2} \left(\frac{k+2}{2}\right)^2 \left(\frac{k+2}{k}\right)^{k},
\end{equation*}
while if $|q_2| \ge 2$, Lemma \ref{b2} gives
\begin{equation*} % \label{bb6}
\sum_{k=0}^\infty \frac{r^{2k+2}}{2k+2}|q_k|^2 \leq   \frac{r^2}{2}+\frac{17}{24} r^6 +2.77\, r^8+ \sum_{k=4}^\infty \frac{r^{2k+2}}{2k+2} \left(\frac{k+2}{2}\right)^2 \left(\frac{k+2}{k}\right)^{k}.
\end{equation*}
Plugging in the value $r = 0.4$, numerical computations show that in either case, the left hand side does note exceed 0.8998.
\end{proof}

Since the hyperbolic area of the ball $B(0,r)$ is
$$
\int_{|z| \le r} \rho^2 \,|dz|^2 = 2\pi \int_0^r \frac{4s}{(1-s^2)^2} \, ds = 4\pi \, \frac{r^2}{1-r^2},
$$
we see that Lemma  \ref{b3} is equivalent to the statement $$\alpha(\eta(0.4)) < 0.8998,$$ where $\eta(0.4)$ is the hyperbolic radius of the ball $\{z : |z| \le 0.4 \}$.
This shows $\Sigma^2_{\mathcal B} < 0.9$, which completes the proof of Theorem \ref{main-thm2}.

%%%%

\bibliographystyle{amsplain}

\end{document}